\title[]{On the Hausdorff measure of non-compactness for the parametrized Prokhorov metric}
\author{Ben Berckmoes}
\keywords{Arzel{\`a}-Ascoli Theorem, Chebyshev radius, Hausdorff measure of non-compactness, Jung's Theorem, parametrized Prokhorov metric, Prokhorov's Theorem, stochastic Arzel{\`a}-Ascoli Theorem}
\thanks{Ben Berckmoes is post doctoral fellow at the Fund for Scientific Research of Flanders (FWO)}
\date{}
\DeclareMathOperator*{\myinf}{in\vphantom{p}f}
\begin{document}

\maketitle

\newtheorem{pro}{Proposition}[section]
\newtheorem{lem}[pro]{Lemma}
\newtheorem{thm}[pro]{Theorem}
\newtheorem{de}[pro]{Definition}
\newtheorem{co}[pro]{Comment}
\newtheorem{no}[pro]{Notation}
\newtheorem{vb}[pro]{Example}
\newtheorem{vbn}[pro]{Examples}
\newtheorem{gev}[pro]{Corollary}
\newtheorem{vrg}[pro]{Question}
\newtheorem{rem}[pro]{Remark}
\newtheorem{lemA}{Lemma}

\begin{abstract}
We quantify Prokhorov's Theorem by establishing an explicit formula for the Hausdorff measure of non-compactness (HMNC) for the parametrized Prokhorov metric on the set of Borel probability measures on a Polish space. Furthermore, we quantify the Arzel{\`a}-Ascoli Theorem by obtaining an upper and a lower estimate for the HMNC for the uniform norm on the space of continuous maps of a compact interval into Euclidean N-space, using Jung's Theorem on the Chebyshev radius. Finally, we combine the obtained results to quantify the stochastic Arzel{\`a}-Ascoli Theorem by providing an upper and a lower estimate for the HMNC for the parametrized Prokhorov metric on the set of multivariate continuous stochastic processes.
\end{abstract}

\section{Introduction and statement of the main results}

For the basic probabilistic concepts and results, we refer the reader to any standard work on probability theory, such as e.g. \cite{Kal}.

Let $S$ be a Polish space, i.e. a separable completely metrizable topological space, and $\mathcal{P}(S)$ the collection of Borel probability measures on $S$, equipped with the weak topology $\tau_w$, i.e. the weakest topology for which each map 
\begin{equation*}
\mathcal{P}(S) \rightarrow \mathbb{R} : P \mapsto \int f dP, 
\end{equation*}
with $f : S \rightarrow \mathbb{R}$ bounded and continuous, is continuous. The space $\mathcal{P}(S)$ is known to be Polish.

We call a collection $\Gamma \subset \mathcal{P}(S)$ uniformly tight iff for each $\epsilon > 0$ there exists a compact set $K \subset S$ such that $P(S \setminus K) < \epsilon$ for all $P \in \Gamma$. 

The following celebrated result interrelates $\tau_w$-relative compactness with uniform tightness.

\begin{thm}[Prokhorov]\label{thm:cProkh}
A collection $\Gamma \subset \mathcal{P}(S)$ is $\tau_w$-relatively compact if and only if it is uniformly tight.
\end{thm}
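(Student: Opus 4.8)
The plan is to prove the two implications separately, exploiting that $\mathcal{P}(S)$ is itself Polish, and in particular metrizable, so that $\tau_w$-relative compactness coincides with $\tau_w$-relative sequential compactness; it therefore suffices throughout to argue in terms of sequences and their weak limits rather than nets.

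For the implication ``uniformly tight $\Rightarrow$ $\tau_w$-relatively compact'', I would first reduce to a compact state space. Since $S$ is Polish, it embeds homeomorphically into the Hilbert cube $[0,1]^{\mathbb{N}}$, whose compactness makes $\mathcal{P}([0,1]^{\mathbb{N}})$ weak-$*$ compact (the probability measures form a weak-$*$ closed, hence compact, subset of the dual unit ball of $C([0,1]^{\mathbb{N}})$, via the Banach--Alaoglu and Riesz representation theorems). Given a sequence $(P_n)$ in $\Gamma$, I would push it forward to the compactification and extract a weak-$*$ convergent subsequence with limit $Q$. The task is then to show that $Q$ assigns no mass to the added boundary $[0,1]^{\mathbb{N}} \setminus S$, so that it descends to a genuine Borel probability measure on $S$ to which the subsequence converges weakly. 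This is exactly where uniform tightness enters: each tightness set $K$ with $P_n(S \setminus K) < \epsilon$ is compact, hence closed in the compactification, and the Portmanteau inequality $Q(K) \geq \limsup_n P_n(K) \geq 1 - \epsilon$ forces $Q$ to concentrate on $S$.

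For the converse ``$\tau_w$-relatively compact $\Rightarrow$ uniformly tight'', which genuinely uses completeness and separability, I would argue by contradiction. Fix a complete metric $d$ and, using separability, for each $k$ a countable cover of $S$ by open balls $B_{k,1}, B_{k,2}, \dots$ of radius $1/k$. The central claim is that for every $k$ and every $\delta > 0$ there is a finite sub-union $A_k = \bigcup_{i=1}^{m} B_{k,i}$ with $P(A_k) > 1 - \delta$ simultaneously for all $P \in \Gamma$. If this claim failed for some $k, \delta$, I could select $P_n \in \Gamma$ with $P_n(\bigcup_{i=1}^{n} B_{k,i}) \leq 1 - \delta$, pass to a weak limit $P$ along a subsequence, and apply the open-set half of the Portmanteau theorem to each fixed finite union to obtain $P(\bigcup_{i=1}^{N} B_{k,i}) \leq 1 - \delta$ for all $N$; letting $N \to \infty$ and using that the balls cover $S$ would give $P(S) \leq 1 - \delta$, contradicting that $P$ is a probability measure. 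Granting the claim, I would choose for each $k$ a set $A_k$ with $P(A_k) > 1 - \epsilon 2^{-k}$ uniformly in $P \in \Gamma$ and set $K = \overline{\bigcap_k A_k}$; this $K$ is closed and totally bounded, hence compact by completeness, and $P(S \setminus K) \leq \sum_k P(S \setminus A_k) < \epsilon$ for all $P \in \Gamma$.

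I expect the main obstacle to lie in the first implication, specifically in passing from a weak-$*$ limit on the compactification back to a probability measure on $S$: one must verify both that no mass leaks onto the added boundary (which tightness supplies) and that weak-$*$ convergence against $C([0,1]^{\mathbb{N}})$ genuinely upgrades to weak convergence in $\mathcal{P}(S)$ against $C_b(S)$. In the converse direction the delicate point is purely the uniformity over $\Gamma$ in the covering claim, where relative compactness is exploited through the Portmanteau theorem rather than for any single fixed measure.
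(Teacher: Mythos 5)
Your proposal is correct---it is essentially the classical textbook proof of Prokhorov's theorem---but it follows a genuinely different route from the paper's. The paper never argues via compactification or sequences at all: Theorem \ref{thm:cProkh} is obtained there as the degenerate case of the quantitative Theorem \ref{thm:qProkh}, whose proof in Section \ref{sec:qProkh} is purely metric. One direction builds, for each $\lambda$ and $\epsilon$, an explicit finite $\rho_\lambda$-net for $\Gamma$ consisting of discrete measures $\sum_{i} (k_i/m)\,\delta_{x_i}$ with rational weights sitting on points of a nearly-tight finite family of small-diameter Borel sets; the other direction transports the approximate tightness of a finite $\rho_\lambda$-net $\Phi$ (automatic for a finite set, by separability) back to every $P \in \Gamma$ through the defining inequality of the Prokhorov metric. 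Combined with the equivalences recorded in the introduction ($\tau_w$-relative compactness iff $\mu_{\text{H},\rho_\lambda}(\Gamma) = 0$ for all $\lambda$, uniform tightness iff $\mu_{\text{ut}}(\Gamma) = 0$, both resting on completeness), this yields the classical statement. What each approach buys: yours is self-contained at the qualitative level, using standard functional-analytic tools (Hilbert-cube embedding, Banach--Alaoglu, Riesz representation, Portmanteau); the paper's avoids all of that machinery and proves strictly more, namely the exact identity $\sup_{\lambda} \mu_{\text{H},\rho_\lambda}(\Gamma) = \mu_{\text{ut}}(\Gamma)$, of which compactness-iff-tightness is only the zero case.

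One remark on the obstacle you flag in the ``tight $\Rightarrow$ relatively compact'' direction: it is real but closes by a standard argument, so it is not a fatal gap. For $F$ closed in $S$, Portmanteau in the cube gives $\limsup_n P_n(F) \leq Q\left(\overline{F}\right)$, where $\overline{F}$ is the closure of $F$ in $[0,1]^{\mathbb{N}}$; since $Q$ is concentrated on the $\sigma$-compact union of the tightness sets, which lies in $S$ and satisfies $\overline{F} \cap S = F$, one gets $Q\left(\overline{F}\right) = Q(F)$, i.e. the closed-set Portmanteau condition in $\mathcal{P}(S)$, which is exactly the required upgrade to weak convergence against $C_b(S)$. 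Your converse direction (contradiction via the open-set half of Portmanteau, then total boundedness plus completeness) is complete as written.
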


Fix $N \in \mathbb{N}_0$ and let $\mathcal{C}$ be the space of continuous maps $x$ of the compact interval $[0,1]$ into Euclidean $N$-space $\mathbb{R}^N$, equipped with the uniform topology $\tau_\infty$, i.e. the topology derived from the uniform norm 
\begin{equation*}
\|x\|_\infty = \sup_{t \in [0,1]} \left|x(t)\right|, 
\end{equation*}
where $\left|\cdot\right|$ stands for the Euclidean norm. The space $\mathcal{C}$ is also known to be Polish.

Recall that a set $\mathcal{X} \subset \mathcal{C}$ is said to be uniformly bounded iff there exists a constant $M > 0$ such that $\left|x(t)\right| \leq M$ for all $x \in \mathcal{X}$ and $t \in \left[0,1\right]$, and uniformly equicontinuous iff for each $\epsilon > 0$ there exists $\delta > 0$ such that $\left|x(s) - x(t)\right| < \epsilon$ for all $x \in \mathcal{X}$ and $s,t \in \left[0,1\right]$ with $\left|s - t\right| < \delta$.

In this setting, the following theorem is a classic (\cite{L93}).

\begin{thm}[Arzel{\`a}-Ascoli]\label{thm:cAA}
A collection $\mathcal{X} \subset \mathcal{C}$ is $\tau_\infty$-relatively compact if and only if it is uniformly bounded and uniformly equicontinuous.
\end{thm}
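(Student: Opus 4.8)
The plan is to exploit completeness of $\mathcal{C}$: since the uniform norm turns $\mathcal{C}$ into a Banach space, a subset is $\tau_\infty$-relatively compact if and only if it is totally bounded. I would therefore recast both implications as statements about the existence of finite $\epsilon$-nets and dispatch the two directions separately. (The alternative route via a diagonal extraction of subsequences is equally classical, but the total-boundedness formulation meshes more naturally with the quantitative, $\epsilon$-net-based theme of the paper.)

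For necessity, suppose $\mathcal{X}$ is relatively compact, hence totally bounded. Fix $\epsilon > 0$ and choose a finite $\epsilon/3$-net $x_1, \ldots, x_n$. Uniform boundedness is then immediate: each $x_i$ is bounded on the compact interval $[0,1]$, say by $M_i$, and every $x \in \mathcal{X}$ lies within $\epsilon/3$ of some $x_i$, so $\|x\|_\infty \leq \max_i M_i + \epsilon/3$. For uniform equicontinuity I would use that each $x_i$, being continuous on the compact set $[0,1]$, is uniformly continuous; picking $\delta_i$ for tolerance $\epsilon/3$ and setting $\delta = \min_i \delta_i$, the three-term estimate $|x(s) - x(t)| \leq |x(s) - x_i(s)| + |x_i(s) - x_i(t)| + |x_i(t) - x(t)|$ delivers the required modulus uniformly over $\mathcal{X}$.

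For sufficiency, suppose $\mathcal{X}$ is uniformly bounded by $M$ and uniformly equicontinuous, and I would establish total boundedness by discretizing both domain and range. Given $\epsilon > 0$, pick $\delta > 0$ from equicontinuity for tolerance $\epsilon/4$ and partition $[0,1]$ by points $0 = s_0 < \cdots < s_m = 1$ of mesh below $\delta$. Cover the closed ball of radius $M$ in $\mathbb{R}^N$ by finitely many balls of radius $\epsilon/4$ about $v_1, \ldots, v_p$. To each $x \in \mathcal{X}$ I associate the tuple $(j_0, \ldots, j_m)$ with $x(s_k)$ in the ball around $v_{j_k}$; there are finitely many such tuples. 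Any two functions sharing a tuple are within $\epsilon$ uniformly: for arbitrary $t$, choosing the nearest grid point $s_k$ and combining the equicontinuity bounds for each function at $s_k$ with the range estimate $|x(s_k) - y(s_k)| \leq \epsilon/2$ bounds $|x(t) - y(t)|$ by four terms each at most $\epsilon/4$. Selecting one representative per occupied tuple yields a finite $\epsilon$-net.

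The main obstacle is this last sufficiency estimate, specifically the bookkeeping in the $\epsilon/4$ decomposition: one must simultaneously control the deviation of $x$ from its grid values, the deviation of $y$ likewise, and the closeness of $x(s_k)$ to $y(s_k)$ through the shared range-tuple, and arrange the constants to sum to $\epsilon$. Verifying that the chosen representatives genuinely form an $\epsilon$-net of all of $\mathcal{X}$, rather than merely a family that is close at the grid points, is where the care lies; the remaining steps, including the standard fact that total boundedness is equivalent to relative compactness in the complete space $\mathcal{C}$, are routine.
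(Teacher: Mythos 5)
Your proof is correct, but it follows a genuinely different route from the paper. The paper never proves Theorem \ref{thm:cAA} directly: it cites it as classical (\cite{L93}) and instead establishes the quantitative Theorem \ref{thm:qAA}, from which the classical statement follows because $\mathcal{X}$ is $\tau_\infty$-relatively compact if and only if $\mu_{\text{H},\infty}(\mathcal{X})=0$ and uniformly equicontinuous if and only if $\mu_{\text{uec}}(\mathcal{X})=0$ (relative compactness trivially forcing uniform boundedness). Your necessity argument --- a finite $\epsilon/3$-net, uniform continuity of each net element, and a three-term estimate --- is essentially identical to the paper's proof of the lower bound $\tfrac{1}{2}\mu_{\text{uec}}(\mathcal{X}) \leq \mu_{\text{H},\infty}(\mathcal{X})$. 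Where you diverge is in sufficiency: you discretize the domain with mesh below $\delta$ and the range by $\epsilon/4$-balls, classify the members of $\mathcal{X}$ by the tuple of range-balls occupied at the grid points, and take one representative per occupied tuple, producing a finite $\epsilon$-net inside $\mathcal{X}$ itself; the paper instead constructs piecewise-linear approximants through the Chebyshev centers of the images $x(I_k)$ over overlapping subintervals and invokes Jung's Theorem (Theorem \ref{thm:Jung}). Both arguments discretize domain and range, but the paper's extra machinery buys the sharp constant $\left(\frac{N}{2N+2}\right)^{1/2}$ relating $\mu_{\text{H},\infty}$ to $\mu_{\text{uec}}$ on uniformly bounded sets, which is the whole point of the quantitative theorem; your tuple argument yields, quantitatively, only the constant $1$, which is entirely sufficient for the qualitative statement and has the virtue of being elementary and self-contained (no Chebyshev centers, no Jung), but it would not recover the Lipschitz equivalence with the stated constants.
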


Let $\Omega = (\Omega,\mathcal{F},\mathbb{P})$ be a fixed probability space. Throughout, a continuous stochastic process (csp) is a Borel measurable map of $\Omega$ into $\mathcal{C}$, and we consider on the set of csp's the weak topology $\tau_w$, i.e. the topology with open sets $\{\xi \text{ csp} \mid \mathbb{P}_\xi \in \mathcal{G}\}$, where $\mathbb{P}_\xi$ is the probability distribution of $\xi$ and $\mathcal{G}$ is a $\tau_w$-open set in $\mathcal{P}\left(\mathcal{C}\right)$. 

A collection $\Xi$ of csp's is said to be stochastically uniformly bounded iff for each $\epsilon > 0$ there exists $M > 0$ such that $\mathbb{P}\left(\|\xi\|_\infty > M\right) < \epsilon$ for all $\xi \in \Xi$, and stochastically uniformly equicontinuous iff for all $\epsilon, \epsilon^\prime > 0$ there exists $\delta > 0$ such that $\mathbb{P}\left(\sup_{\left|s - t\right| < \delta} \left|\xi(s) - \xi(t)\right| \geq \epsilon\right) < \epsilon^\prime$ for all $\xi \in \Xi$, the supremum taken over all $s,t \in [0,1]$ for which $\left|s - t\right| < \delta$.

It is not hard to see that combining Theorem \ref{thm:cProkh} and Theorem \ref{thm:cAA} yields the following stochastic version of Theorem \ref{thm:cAA}, which plays a crucial role in the development of functional central limit theory.

\begin{thm}[stochastic Arzel{\`a}-Ascoli]\label{thm:csAA}
A collection $\Xi$ of csp's is $\tau_w$-relatively compact if and only if it is stochastically uniformly bounded and stochastically uniformly equicontinuous.
\end{thm}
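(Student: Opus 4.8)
The plan is to transfer the problem from the space of csp's to the space $\mathcal{P}(\mathcal{C})$ of their distributions, where Prokhorov's Theorem applies, and to exploit the Arzel\`a--Ascoli characterization of the compact subsets of $\mathcal{C}$ in order to rewrite uniform tightness in purely probabilistic terms. By the very definition of $\tau_w$ on the csp's, the family $\Xi$ is $\tau_w$-relatively compact precisely when $\{\mathbb{P}_\xi \mid \xi \in \Xi\}$ is $\tau_w$-relatively compact in $\mathcal{P}(\mathcal{C})$; since $\mathcal{C}$ is Polish, Theorem \ref{thm:cProkh} lets me replace the latter by uniform tightness of $\{\mathbb{P}_\xi \mid \xi \in \Xi\}$. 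Thus everything reduces to proving that uniform tightness of $\{\mathbb{P}_\xi \mid \xi \in \Xi\}$ is equivalent to the conjunction of stochastic uniform boundedness and stochastic uniform equicontinuity of $\Xi$.

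For the forward direction I would fix $\epsilon > 0$ and pick, by uniform tightness, a compact set $\mathcal{K} \subset \mathcal{C}$ with $\mathbb{P}_\xi(\mathcal{C} \setminus \mathcal{K}) < \epsilon$ for all $\xi \in \Xi$. By Theorem \ref{thm:cAA} the set $\mathcal{K}$ is uniformly bounded and uniformly equicontinuous. Uniform boundedness yields an $M > 0$ with $\{x \mid \|x\|_\infty > M\} \subset \mathcal{C} \setminus \mathcal{K}$, whence $\mathbb{P}(\|\xi\|_\infty > M) < \epsilon$; this is stochastic uniform boundedness. For stochastic uniform equicontinuity I would, given $\epsilon, \epsilon^\prime > 0$, choose $\mathcal{K}$ with complement measure below $\epsilon^\prime$ and use its uniform equicontinuity to produce $\delta > 0$ with $\{x \mid \omega_x(\delta) \geq \epsilon\} \subset \mathcal{C} \setminus \mathcal{K}$, where $\omega_x(\delta) = \sup_{|s-t| < \delta} |x(s) - x(t)|$ denotes the modulus of continuity; the required bound follows at once. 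At this stage I should record that the sets $\{x \mid \|x\|_\infty > M\}$ and $\{x \mid \omega_x(\delta) \geq \epsilon\}$ are Borel and that $\omega_\xi(\delta)$ is a genuine random variable, the latter because path continuity reduces the supremum to a countable dense set of pairs $(s,t)$.

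The substance lies in the converse. Given $\epsilon > 0$, stochastic uniform boundedness supplies an $M > 0$ with $\mathbb{P}(\|\xi\|_\infty > M) < \epsilon/2$ for all $\xi \in \Xi$, and stochastic uniform equicontinuity supplies, for each $n \in \mathbb{N}_0$, a $\delta_n > 0$ with $\mathbb{P}(\omega_\xi(\delta_n) \geq 1/n) < \epsilon/2^{n+1}$ for all $\xi \in \Xi$. I would then set
\begin{equation*}
\mathcal{K} = \left\{ x \in \mathcal{C} \mid \|x\|_\infty \leq M \text{ and } \omega_x(\delta_n) \leq 1/n \text{ for all } n \right\}.
\end{equation*}
The key point is that $\mathcal{K}$ is compact: it is closed because $x \mapsto \|x\|_\infty$ and $x \mapsto \omega_x(\delta_n)$ are continuous (indeed the latter is $2$-Lipschitz for $\|\cdot\|_\infty$, since $|\omega_x(\delta) - \omega_y(\delta)| \leq 2 \|x - y\|_\infty$); it is uniformly bounded by $M$; and it is uniformly equicontinuous, for given $\eta > 0$ one picks $n$ with $1/n < \eta$, so that $|s - t| < \delta_n$ forces $|x(s) - x(t)| \leq \omega_x(\delta_n) \leq 1/n < \eta$ uniformly over $\mathcal{K}$. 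Theorem \ref{thm:cAA} then yields relative compactness, and closedness upgrades this to compactness. A union bound over the defining constraints gives $\mathbb{P}_\xi(\mathcal{C} \setminus \mathcal{K}) < \epsilon/2 + \sum_{n} \epsilon/2^{n+1} = \epsilon$ for every $\xi \in \Xi$, so $\{\mathbb{P}_\xi \mid \xi \in \Xi\}$ is uniformly tight and Theorem \ref{thm:cProkh} closes the argument. The main obstacle I anticipate is precisely the verification that this intersection of equicontinuity constraints is genuinely compact, together with the measurability and continuity of the modulus functional needed to make the union bound legitimate; once those technical points are secured, the result is a clean two-way translation through Prokhorov and Arzel\`a--Ascoli.
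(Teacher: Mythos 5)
Your proposal is correct and takes essentially the same approach as the paper: the paper's entire ``proof'' of Theorem \ref{thm:csAA} is the remark that it follows by combining Theorem \ref{thm:cProkh} with Theorem \ref{thm:cAA}, and that is exactly what you carry out, passing to the laws $\mathbb{P}_\xi$ via Prokhorov and building the compact set $\mathcal{K}$ from the boundedness and modulus-of-continuity constraints (the qualitative counterpart of the paper's Lemmas \ref{lem:characMut}--\ref{lem:characMsuec} used later for the quantitative version). The measurability and closedness points you flag are handled correctly, so your write-up in fact supplies details the paper leaves implicit.
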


In a complete metric space $(X,d)$, the Hausdorff measure of non-compactness of a set $A \subset X$  (\cite{BG80},\cite{WW96}) is given by 
\begin{equation*}
\mu_{\text{\upshape{H}},d}(A) = \myinf_{F} \sup_{x \in A} \myinf_{y \in F} d(x,y), 
\end{equation*}
the first infimum running through all finite sets $F \subset X$. It is well known that $A$ is $d$-bounded if and only if $\mu_{\textrm{\upshape{H}},d}(A) < \infty$, and $d$-relatively compact if and only if $\mu_{\textrm{\upshape{H}},d}(A) = 0$.

Fix a complete metric $d$ metrizing the topology of the Polish space $S$. The Prokhorov distance with parameter $\lambda \in \mathbb{R}^+_0$ between probability measures $P,Q \in \mathcal{P}(S)$ (\cite{R91}) is defined as the infimum of all numbers $\alpha \in \mathbb{R}^+_0$ for which the inequality 
\begin{equation*}
P\left(A\right) \leq Q\left(A^{(\lambda \alpha)}\right) + \alpha
\end{equation*}
holds for all Borel sets $A \subset S$, where 
\begin{equation*}
A^{(\epsilon)} = \left\{x \in S \mid \inf_{a \in A} d(a,x) \leq \epsilon\right\}. 
\end{equation*}
This distance is denoted by $\rho_\lambda(P,Q)$. It defines a complete metric on $\mathcal{P}(S)$ which induces the weak topology $\tau_w$. It is also known that $\rho_{\lambda_1} \leq \rho_{\lambda_2}$ if $\lambda_1 \geq \lambda_2$, and that 
\begin{equation*}
\sup_{\lambda \in \mathbb{R}^+_0} \rho_\lambda(P,Q) = \sup_A \left|P(A) - Q(A)\right|,
\end{equation*}
the supremum being taken over all Borel sets $A \subset S$.

For a collection $\Gamma \subset \mathcal{P}(S)$, we define the measure of non-uniform tightness as 
\begin{equation*}
\mu_{\text{\upshape ut}}(\Gamma) = \sup_{\epsilon > 0} \myinf_{Y} \sup_{P \in \Gamma} P\left(S \setminus \cup_{y \in Y} B(y,\epsilon)\right), 
\end{equation*}
where the infimum runs through all finite sets $Y \subset S$, and 
\begin{equation*}
B(y,\epsilon) = \left\{x \in S \mid d(y,x) < \epsilon\right\}.
\end{equation*}
It is clear that $\mu_{\text{\upshape ut}}(\Gamma) = 0$ if $\Gamma$ is uniformly tight. But the converse holds as well. Indeed, suppose that $\mu_{\text{\upshape ut}}(\Gamma) = 0$ and fix $\epsilon > 0$. Then, for each $n \in \mathbb{N}_0$, choose a finite set $Y_n \subset S$ such that 
\begin{equation*}
P\left(S \setminus \cup_{y \in Y_n} B(y,1/n)\right) < \epsilon/2^n 
\end{equation*}
for all $P \in \Gamma$. Put 
\begin{equation*}
K = \cap_{n \in \mathbb{N}_0} \cup_{y \in Y_n} B^\star(y,1/n), 
\end{equation*}
with $B^\star(y,1/n)$ the closure of $B(y,1/n)$. Then $K$ is a compact set such that $P(S \setminus K) < \epsilon$ for all $P \in \Gamma$. We conclude that $\Gamma$ is uniformly tight. The measure $\mu_{\text{\upshape{ut}}}$ is slightly weaker than the weak measure of tightness studied in \cite{BLV11}.

By the previous considerations, we know that a set $\Gamma \subset \mathcal{P}(S)$ is $\tau_w$-relatively compact if and only if $\mu_{\text{\upshape{H}},\rho_\lambda}(\Gamma) = 0$ for each $\lambda \in \mathbb{R}^+_0$, and uniformly tight if and only if $\mu_{\text{\upshape ut}}(\Gamma) = 0$. Therefore, Theorem \ref{thm:qProkh}, our first main result, which provides a quantitative relation between the numbers $\mu_{\text{\upshape{H}},\rho_\lambda}(\Gamma)$ and $\mu_{\text{\upshape ut}}(\Gamma)$, is a strict generalization of Theorem \ref{thm:cProkh}. The proof is given in Section \ref{sec:qProkh}.

\begin{thm}[quantitative Prokhorov]\label{thm:qProkh}
For a collection $\Gamma \subset \mathcal{P}(S)$, $$\sup_{\lambda \in \mathbb{R}^+_0}\mu_{\text{\upshape{H}},\rho_\lambda}(\Gamma) = \mu_{\text{\upshape ut}}(\Gamma).$$ 
\end{thm}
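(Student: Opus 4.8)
The plan is to prove the two inequalities $\sup_{\lambda}\mu_{\mathrm{H},\rho_\lambda}(\Gamma)\le\mu_{\mathrm{ut}}(\Gamma)$ and $\mu_{\mathrm{ut}}(\Gamma)\le\sup_{\lambda}\mu_{\mathrm{H},\rho_\lambda}(\Gamma)$ separately, writing $R=\mu_{\mathrm{ut}}(\Gamma)$ and $L=\sup_{\lambda}\mu_{\mathrm{H},\rho_\lambda}(\Gamma)$. I note at the outset that $R\in[0,1]$, being a supremum of infima of suprema of probabilities; this will bound $L$ once the first inequality is established, so no infinite values arise. The mechanism driving both directions is the built-in trade-off in $\rho_\lambda$ between a spatial tolerance $\lambda\alpha$ and a mass tolerance $\alpha$: shrinking the covering radius $\epsilon$ against a fixed $\lambda$ lets finitely many point masses absorb the locally concentrated part of each $P$, while sending $\lambda\to 0$ turns the spatial slack into the dominant effect. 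Throughout I work directly from the defining inequality $P(A)\le Q(A^{(\lambda\alpha)})+\alpha$ rather than through a coupling representation.

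For the inequality $L\le R$, I fix $\lambda>0$ and $\eta>0$ and show $\mu_{\mathrm{H},\rho_\lambda}(\Gamma)\le R+2\eta$. Choose $\epsilon>0$ with $\epsilon\le\lambda\eta$, and then, by the definition of $\mu_{\mathrm{ut}}$ applied to this $\epsilon$, a finite set $\{y_1,\dots,y_m\}\subset S$ with $\sup_{P\in\Gamma}P(A_0)\le R+\eta$, where $A_0=S\setminus\bigcup_i B(y_i,\epsilon)$ and $A_i=B(y_i,\epsilon)\setminus\bigcup_{j<i}B(y_j,\epsilon)$ form a finite Borel partition of $S$ with $A_i\subset B(y_i,\epsilon)$ for $i\ge 1$. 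The candidate finite set $F$ consists of the finitely many discrete measures $Q_g=\sum_i g_i\delta_{y_i}$ obtained as $g$ ranges over an $\ell^1$-$\delta$-net of the simplex of weight vectors (the $A_0$-mass being placed at an arbitrary fixed centre). Given $P\in\Gamma$, pick $g$ with $\sum_i|P(A_i)-g_i|\le\delta$. For any Borel $A$, setting $I=\{i\ge 1:A\cap A_i\ne\emptyset\}$, each $i\in I$ yields $y_i\in A^{(\epsilon)}\subset A^{(\lambda\alpha)}$ whenever $\alpha\ge\eta$ (so $\lambda\alpha\ge\lambda\eta\ge\epsilon$), whence $P(A)\le P(A_0)+\sum_{i\in I}P(A_i)\le(R+\eta)+\delta+Q_g(A^{(\lambda\alpha)})$. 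Taking $\delta\le\eta$ and $\alpha=R+2\eta$ gives $\rho_\lambda(P,Q_g)\le R+2\eta$, hence the claim after $\eta\downarrow 0$ and then $\sup_\lambda$.

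For the reverse inequality I show, for every $\epsilon>0$ and $\eta>0$, that $\inf_Y\sup_{P\in\Gamma}P(S\setminus\bigcup_{y\in Y}B(y,\epsilon))\le L+2\eta$, where $L$ is finite by the first part. Here I exploit the freedom to take $\lambda$ small: fix $\lambda>0$ with $\lambda(L+\eta)\le\epsilon/2$, set $\alpha=\mu_{\mathrm{H},\rho_\lambda}(\Gamma)+\eta\le L+\eta$, and choose a finite $F=\{Q_1,\dots,Q_n\}$ with $\sup_{P\in\Gamma}\min_j\rho_\lambda(P,Q_j)<\alpha$. Each $Q_j$, being a Borel probability measure on the Polish space $S$, is tight, so pick compact $K_j$ with $Q_j(S\setminus K_j)<\eta$; cover the compact $K=\bigcup_j K_j$ by balls $B(y_i,\epsilon/2)$ with centres $y_i\in K$ and let $Y=\{y_i\}$. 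The triangle inequality gives $K^{(\epsilon/2)}\subset\bigcup_i B(y_i,\epsilon)$, while $\lambda\alpha\le\epsilon/2$ gives $(S\setminus K^{(\epsilon/2)})^{(\lambda\alpha)}\subset S\setminus K$. Thus for any $P\in\Gamma$, choosing $j$ with $\rho_\lambda(P,Q_j)<\alpha$ and applying the defining inequality to $A=S\setminus K^{(\epsilon/2)}$ yields $P(S\setminus\bigcup_i B(y_i,\epsilon))\le P(A)\le Q_j(S\setminus K)+\alpha<\eta+(L+\eta)$. Letting $\eta\downarrow 0$ and then $\sup_\epsilon$ gives $R\le L$.

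The main obstacle is getting the quantifier order on $\lambda$ and $\epsilon$ right, since the two directions pull $\lambda$ into opposite regimes ($\lambda$ fixed with $\epsilon\to 0$ for $L\le R$, and $\lambda\to 0$ against a fixed $\epsilon$ for $R\le L$); in each regime one must force the spatial tolerance $\lambda\alpha$ below the relevant radius while keeping the mass tolerance $\alpha$ near its target. The remaining delicate point is the bookkeeping in the direction $L\le R$, where the net mesh $\delta$ and the residual mass $P(A_0)$ must be absorbed simultaneously into the single parameter $\alpha$; using the one-sided Prokhorov inequality rather than a transport estimate keeps this accounting transparent.
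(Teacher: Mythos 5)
Your proof is correct and follows essentially the same route as the paper's: for $\sup_\lambda\mu_{\mathrm{H},\rho_\lambda}\le\mu_{\mathrm{ut}}$ you partition via finitely many small balls and approximate each $P$ by a discrete measure with discretized weights (an $\ell^1$-net of the simplex where the paper uses rational weights $k_i/m$), and for the reverse you take $\lambda$ small against a fixed $\epsilon$, pass to a finite $\rho_\lambda$-net, and exploit tightness of the finitely many net measures, exactly as in the paper (which uses finitely many balls directly where you use Ulam compacts). The only differences are these minor bookkeeping choices, not the argument itself.
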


From now on, we consider on the space $\mathcal{C}$ the uniform metric, derived from the uniform norm, and for a set $\mathcal{X} \subset \mathcal{C}$, we let $\mu_{\text{\upshape H},\infty}(\mathcal{X})$ stand for the Hausdorff measure of non-compactness, more precisely, 
\begin{equation*}
\mu_{\text{\upshape H},\infty}(\mathcal{X}) = \myinf_{\mathcal{F}} \sup_{x \in \mathcal{X}} \myinf_{y \in \mathcal{F}} \|x - y\|_\infty, 
\end{equation*}
the infimum taken over all finite sets $\mathcal{F} \subset \mathcal{C}$. Clearly, $\mathcal{X}$ is $\tau_\infty$-relatively compact if and only if $\mu_{\text{\upshape H},\infty}(\mathcal{X}) = 0$.

The measure of non-uniform equicontinuity of $\mathcal{X} \subset \mathcal{C}$ is defined by 
\begin{equation*}
\mu_{\text{\upshape{uec}}}(\mathcal{X}) = \myinf_{\delta > 0} \sup_{x \in \mathcal{X}} \sup_{\left|s - t\right|< \delta} \left|x(s) - x(t)\right|, 
\end{equation*}
the second supremum running through all $s,t \in [0,1]$ with $\left|s - t\right| < \delta$. One readily sees that $\mathcal{X}$ is uniformly equicontinuous if and only if $\mu_{\textrm{\upshape uec}}(\mathcal{X}) = 0$. In \cite{BG80} it was shown that $\mu_{\textrm{\upshape{uec}}}$ is a measure of non-compactness on the space $\mathcal{C}$ (Theorem 11.2).

Theorem \ref{thm:qAA}, our second main result, entails that the measures $\mu_{\text{\upshape H},\infty}$ and $\mu_{\textrm{\upshape uec}}$ are Lipschitz equivalent on the collection of uniformly bounded subsets of $\mathcal{C}$, and thus it strictly generalizes Theorem \ref{thm:cAA}. The proof, which hinges upon a classical result of Jung's on the Chebyshev radius, is given in Section \ref{sec:qAA}.

\begin{thm}[quantitative Arzel{\`a}-Ascoli]\label{thm:qAA}
For $\mathcal{X} \subset \mathcal{C}$,
$$\frac{1}{2} \mu_{\textrm{\upshape{uec}}}(\mathcal{X}) \leq \mu_{\textrm{\upshape{H}},\infty}(\mathcal{X}).$$
Suppose, in addition, that $\mathcal{X}$ is uniformly bounded. Then
$$\mu_{\textrm{\upshape{H}},\infty}(\mathcal{X}) \leq  \left(\frac{N}{2N + 2}\right)^{1/2} \mu_{\textrm{\upshape{uec}}}(\mathcal{X}).$$ 
In particular, if $N = 1$, then
$$\mu_{\textrm{\upshape{H}},\infty}(\mathcal{X}) = \frac{1}{2} \mu_{\textrm{\upshape{uec}}}(\mathcal{X}),$$
and, regardless of $N$,
$$\mu_{\textrm{\upshape{H}},\infty}(\mathcal{X}) \leq  \frac{\sqrt{2}}{2} \mu_{\textrm{\upshape{uec}}}(\mathcal{X}).$$
\end{thm}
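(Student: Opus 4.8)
The plan is to treat the two inequalities separately, the lower bound being elementary and the upper bound being the substantial part where Jung's theorem enters.

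For the lower bound $\tfrac12\mu_{\textrm{uec}}(\mathcal X)\le\mu_{\textrm{H},\infty}(\mathcal X)$, I would fix $\eta>0$ and a finite set $\mathcal F\subset\mathcal C$ with $\sup_{x\in\mathcal X}\myinf_{y\in\mathcal F}\|x-y\|_\infty<\mu_{\textrm{H},\infty}(\mathcal X)+\eta$. Since $[0,1]$ is compact, each of the finitely many $y\in\mathcal F$ is uniformly continuous, so there is a single $\delta>0$ with $|y(s)-y(t)|<\eta$ whenever $|s-t|<\delta$, for all $y\in\mathcal F$ simultaneously. Given $x\in\mathcal X$, choosing $y\in\mathcal F$ with $\|x-y\|_\infty<\mu_{\textrm{H},\infty}(\mathcal X)+\eta$ and inserting it into $|x(s)-x(t)|\le|x(s)-y(s)|+|y(s)-y(t)|+|y(t)-x(t)|$ yields, for $|s-t|<\delta$, the bound $2\mu_{\textrm{H},\infty}(\mathcal X)+3\eta$. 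Taking suprema over $x$ and over $|s-t|<\delta$ and then the infimum over $\delta$ gives $\mu_{\textrm{uec}}(\mathcal X)\le2\mu_{\textrm{H},\infty}(\mathcal X)+3\eta$, and letting $\eta\downarrow0$ finishes this direction. Note that uniform boundedness is not needed here.

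For the upper bound I would fix $\epsilon>0$ and exploit that $\mu_{\textrm{uec}}(\mathcal X)$ is an infimum over $\delta$ of a nondecreasing quantity: choose $\delta_0>0$ so that $D:=\sup_{x\in\mathcal X}\sup_{|s-t|<\delta_0}|x(s)-x(t)|<\mu_{\textrm{uec}}(\mathcal X)+\epsilon$, and partition $[0,1]$ into subintervals $I_1,\dots,I_n$ of length $<\delta_0$. For each $x$ and each $i$ the image $x(I_i)\subset\mathbb R^N$ has diameter at most $D$, so Jung's theorem places it inside a closed Euclidean ball whose centre $c_i(x)$ and radius $r_i(x)$ satisfy $r_i(x)\le\bigl(\tfrac{N}{2N+2}\bigr)^{1/2}D$. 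This is precisely the step that produces the constant $\bigl(\tfrac{N}{2N+2}\bigr)^{1/2}$: replacing $x$ on $I_i$ by the constant value $c_i(x)$ gives a pointwise error at most $\bigl(\tfrac{N}{2N+2}\bigr)^{1/2}D$, which for $N=1$ is exactly $D/2$.

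It remains to turn these per-subinterval centres into a single finite family of continuous functions without spoiling the constant, and this is where I expect the real difficulty to lie. Two features must be reconciled. First, finiteness: using uniform boundedness of $\mathcal X$, the centre vectors $(c_1(x),\dots,c_n(x))$ range over a bounded, hence totally bounded, subset of $(\mathbb R^N)^n$, so after discretising to an $\epsilon$-grid only finitely many centre profiles occur. Second, continuity: the piecewise-constant reconstruction attached to a profile is discontinuous at the nodes, and one must interpolate across each node $t_i$. Here the continuity of $x$ is the crucial resource, since $x(t_i)$ lies in both $x(I_i)$ and $x(I_{i+1})$, so the two Chebyshev balls $\bar B(c_i(x),r_i(x))$ and $\bar B(c_{i+1}(x),r_{i+1}(x))$ always overlap and a short continuous transition can be threaded through their intersection. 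The delicate point, and the main obstacle, is to arrange this transition so that the extra error it introduces is governed by the local oscillation of $x$ and is therefore absorbed into $D<\mu_{\textrm{uec}}(\mathcal X)+\epsilon$ rather than added on top of the Jung constant; a clean way to organise this is to replace the fixed partition by the moving-window Chebyshev centre, namely the map sending $t$ to the Chebyshev centre of the set $x([t-\tfrac{\delta_0}{2},t+\tfrac{\delta_0}{2}]\cap[0,1])$, which is automatically continuous and lies within $\bigl(\tfrac{N}{2N+2}\bigr)^{1/2}D$ of $x$ pointwise, and then to discretise the window data rather than the functions themselves. Letting $\epsilon\downarrow0$ would give $\mu_{\textrm{H},\infty}(\mathcal X)\le\bigl(\tfrac{N}{2N+2}\bigr)^{1/2}\mu_{\textrm{uec}}(\mathcal X)$. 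The two displayed special cases are then immediate: for $N=1$ the Jung constant equals $\tfrac12$, so the upper bound meets the lower bound and forces equality $\mu_{\textrm{H},\infty}(\mathcal X)=\tfrac12\mu_{\textrm{uec}}(\mathcal X)$; and since $\bigl(\tfrac{N}{2N+2}\bigr)^{1/2}$ increases in $N$ with supremum $\tfrac{1}{\sqrt2}=\tfrac{\sqrt2}{2}$, the bound $\mu_{\textrm{H},\infty}(\mathcal X)\le\tfrac{\sqrt2}{2}\mu_{\textrm{uec}}(\mathcal X)$ holds regardless of $N$.
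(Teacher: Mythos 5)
Your lower bound is correct and is essentially the paper's own argument (finite net, equicontinuity of the finite net, triangle inequality), and your reduction of the two ``in particular'' statements is also fine. The problem is in the upper bound, at precisely the step you yourself flag as the main difficulty: producing a \emph{finite} family of continuous approximants. The device you propose for this, the moving-window Chebyshev centre $c_x(t)=$ centre of $x\bigl(\left[t-\tfrac{\delta_0}{2},t+\tfrac{\delta_0}{2}\right]\cap[0,1]\bigr)$, cannot close the gap, because the family $\{c_x : x\in\mathcal{X}\}$ inherits the non-equicontinuity of $\mathcal{X}$ at small scales and is therefore in general not relatively compact. Concretely, take $N=1$ and $x_n$ equal to $0$ on $\left[0,\tfrac12\right]$, rising linearly to $1$ on $\left[\tfrac12,\tfrac12+\tfrac1n\right]$, and equal to $1$ afterwards: then $c_{x_n}$ rises from $0$ to $\tfrac12$ across an interval of length $\tfrac1n$, so $\mu_{\mathrm{uec}}(\{c_{x_n}\})\geq\tfrac12$ and hence $\mu_{\mathrm{H},\infty}(\{c_{x_n}\})\geq\tfrac14$. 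Since your plan needs to cover $\{c_x : x\in\mathcal{X}\}$ within $\epsilon$ by a finite set (so that $\mathcal{X}$ is covered within the Jung bound plus $\epsilon$), and this is impossible for $\epsilon<\tfrac14$ in the example, no amount of ``discretising the window data'' can rescue the argument: the obstruction is the non-compactness of the family of centres itself, not the way it is parametrized.

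The paper closes this step differently, and in doing so the issue you were trying to avoid --- a transition error ``governed by the local oscillation of $x$'' --- never arises at all. It fixes finitely many \emph{overlapping} windows $I_0,\dots,I_n$ of diameter $<\delta$, consecutive ones overlapping in a whole subinterval $\left[t_{2k+1},t_{2k+2}\right]$, and defines $\widetilde{x}$ to be the constant $c_{x,k}$ (Chebyshev centre of $x(I_k)$) on the core $\left[t_{2k},t_{2k+1}\right]$ and the linear interpolant between $c_{x,k}$ and $c_{x,k+1}$ on the overlap. The key point is that for $t$ in the overlap, $x(t)$ lies in \emph{both} $x(I_k)$ and $x(I_{k+1})$, hence within the Jung radius of both centres, hence (Lemma \ref{lem:Tech1}) within the Jung radius of every point of the segment joining them; so $\|\widetilde{x}-x\|_\infty\leq\left(\tfrac{N}{2N+2}\right)^{1/2}\alpha$ with \emph{zero} additional transition error, not an error absorbed into $D$. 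Finiteness then comes for free from fixing the combinatorics rather than the functions: $\widetilde{x}$ is piecewise linear on the fixed partition $\{t_k\}$, its node values lie in $B^\star(0,3M)$ by uniform boundedness, and snapping each node value to a finite $\epsilon$-net $Y$ of $B^\star(0,3M)$ moves $\widetilde{x}$ by at most $\epsilon$ uniformly (Lemma \ref{lem:Tech2}); the finite net for $\mathcal{X}$ is then the genuinely finite set of piecewise-linear maps with node values in $Y$. This simultaneously achieves the finiteness and preserves the constant --- the two requirements your proposal could not reconcile.
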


We transport the parametrized Prokhorov metric from $\mathcal{P}(\mathcal{C})$ to the collection of csp's via their probability distributions. Thus, for csp's $\xi$ and $\eta$,
\begin{equation*}
\rho_\lambda(\xi,\eta) = \rho_\lambda\left(\mathbb{P}_\xi,\mathbb{P}_\eta\right).
\end{equation*}
Note that a set of csp's $\Xi$ is $\tau_\omega$-relatively compact if and only if $\mu_{\text{\upshape{H}},\rho_\lambda}(\Xi) = 0$ for all $\lambda \in \mathbb{R}^+_0$.

For a set of csp's $\Xi$, the measure of non-stochastic uniform boundedness is given by
\begin{equation*}
\mu_{\text{\upshape{sub}}}(\Xi) = \myinf_{M \in \mathbb{R}^+_0} \sup_{\xi \in \Xi} \mathbb{P}(\|\xi\|_\infty > M),
\end{equation*}
and the measure of non-stochastic uniform equicontinuity by
\begin{equation*}
\mu_{\text{\upshape{suec}}}(\Xi) = \sup_{\epsilon > 0} \myinf_{\delta> 0} \sup_{\xi \in \Xi} \mathbb{P}\left(\sup_{\left|s - t\right| < \delta} \left|\xi(s) - \xi(t)\right| \geq \epsilon\right),
\end{equation*}
where the third supremum is taken over all $s,t \in [0,1]$ with $\left|s - t\right|< \delta$. It is easily seen that $\Xi$ is stochastically uniformly bounded if and only if $\mu_{\text{\upshape{sub}}}(\Xi) = 0$, and  stochastically uniformly equicontinuous if and only if $\mu_{\text{\upshape{suec}}}(\Xi) = 0$. The measure $\mu_{\text{\upshape{suec}}}$ was studied in \cite{BLV11}.

In Section \ref{sec:qsAA}, we prove that combining Theorem \ref{thm:qProkh} and Theorem \ref{thm:qAA} leads to Theorem \ref{thm:qsAA}, our third main result, which gives an upper and a lower bound for $\sup_{\lambda \in \mathbb{R}^+_0} \mu_{\text{\upshape H},\rho_\lambda}$ in terms of $\mu_{\text{\upshape sub}}$ and $\mu_{\text{\upshape suec}}$. Theorem \ref{thm:qsAA} strictly generalizes Theorem \ref{thm:csAA}.

\begin{thm}[quantitative stochastic Arzel{\`a}-Ascoli]\label{thm:qsAA}
Let $\Xi$ be a collection of csp's. Then
$$\max\{\mu_{\text{\upshape{sub}}}(\Xi), \mu_{\text{\upshape{suec}}}(\Xi)\} \leq \sup_{\lambda \in \mathbb{R}^+_0}\mu_{\textrm{\upshape{H}},\rho_\lambda}(\Xi) \leq \mu_{\text{\upshape{sub}}}(\Xi) + \mu_{\text{\upshape{suec}}}(\Xi).$$
In particular, if $\Xi$ is stochastically uniformly bounded, then
$$\sup_{\lambda \in \mathbb{R}^+_0}\mu_{\textrm{\upshape{H}},\rho_\lambda}(\Xi) = \mu_{\text{\upshape{suec}}}(\Xi),$$
and, if $\Xi$ is stochastically uniformly equicontinuous, then
$$\sup_{\lambda \in \mathbb{R}^+_0}\mu_{\textrm{\upshape{H}},\rho_\lambda}(\Xi) = \mu_{\text{\upshape{sub}}}(\Xi).$$
\end{thm}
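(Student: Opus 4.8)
The plan is to use Theorem \ref{thm:qProkh} to transfer the problem to the measure of non-uniform tightness, and then to relate the latter to $\mu_{\text{sub}}$ and $\mu_{\text{suec}}$ by exploiting the Arzel\`a-Ascoli characterization of relative compactness in $\mathcal{C}$. Writing $\Gamma = \{\mathbb{P}_\xi : \xi \in \Xi\} \subset \mathcal{P}(\mathcal{C})$ and applying Theorem \ref{thm:qProkh} to the Polish space $S = \mathcal{C}$ gives at once
\[
\sup_{\lambda \in \mathbb{R}^+_0} \mu_{\text{H},\rho_\lambda}(\Xi) = \mu_{\text{ut}}(\Gamma),
\]
so the whole statement reduces to the two inequalities $\max\{\mu_{\text{sub}}(\Xi), \mu_{\text{suec}}(\Xi)\} \leq \mu_{\text{ut}}(\Gamma)$ and $\mu_{\text{ut}}(\Gamma) \leq \mu_{\text{sub}}(\Xi) + \mu_{\text{suec}}(\Xi)$. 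The two special cases then follow immediately by putting $\mu_{\text{sub}}(\Xi) = 0$, respectively $\mu_{\text{suec}}(\Xi) = 0$. Throughout I use that, since $\mathbb{P}_\xi(\mathcal{C} \setminus \cup_{y \in Y} B(y,\epsilon)) = \mathbb{P}(\xi \notin \cup_{y \in Y} B(y,\epsilon))$, one has $\mu_{\text{ut}}(\Gamma) = \sup_{\epsilon > 0} \inf_Y \sup_{\xi \in \Xi} \mathbb{P}(\xi \notin \cup_{y \in Y} B(y,\epsilon))$, the infimum over finite $Y \subset \mathcal{C}$.

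For the two lower bounds I would argue by elementary inclusions of events. For $\mu_{\text{sub}}$, fix $\epsilon > 0$ and a finite $Y$; since $\xi \in \cup_{y \in Y} B(y,\epsilon)$ forces $\|\xi\|_\infty \leq \max_{y \in Y}\|y\|_\infty + \epsilon =: M$, the event $\{\|\xi\|_\infty > M\}$ is contained in $\{\xi \notin \cup_{y} B(y,\epsilon)\}$, whence $\mu_{\text{sub}}(\Xi) \leq \sup_{\xi} \mathbb{P}(\xi \notin \cup_y B(y,\epsilon))$; taking the infimum over $Y$ and the supremum over $\epsilon$ yields $\mu_{\text{sub}}(\Xi) \leq \mu_{\text{ut}}(\Gamma)$. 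For $\mu_{\text{suec}}$, fix $\epsilon' > 0$, choose $\epsilon$ small relative to $\epsilon'$, and pick a finite $Y$ nearly realizing the infimum; since each $y \in Y$ is uniformly continuous, there is $\delta > 0$ with $|y(s) - y(t)| < \epsilon'/3$ whenever $|s - t| < \delta$, and a three-term triangle inequality then shows that $\xi \in \cup_{y} B(y,\epsilon)$ forces $\sup_{|s-t|<\delta}|\xi(s) - \xi(t)| < \epsilon'$. Consequently $\{\sup_{|s-t|<\delta}|\xi(s) - \xi(t)| \geq \epsilon'\} \subseteq \{\xi \notin \cup_y B(y,\epsilon)\}$, and passing to the relevant suprema and infima (including the outer $\sup_{\epsilon'}$) gives $\mu_{\text{suec}}(\Xi) \leq \mu_{\text{ut}}(\Gamma)$.

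The upper bound is where the quantitative Arzel\`a-Ascoli theorem enters, and it is the main obstacle. Fix $\epsilon > 0$ together with parameters $M, \delta, \epsilon' > 0$, and consider the \emph{good} set
\[
G = \{x \in \mathcal{C} : \|x\|_\infty \leq M,\ \sup_{|s-t|<\delta}|x(s) - x(t)| < \epsilon'\}.
\]
This $G$ is uniformly bounded (by $M$) and satisfies $\mu_{\text{uec}}(G) \leq \epsilon'$, so Theorem \ref{thm:qAA} yields $\mu_{\text{H},\infty}(G) \leq (N/(2N+2))^{1/2}\,\epsilon'$; choosing $\epsilon'$ small enough that this is $< \epsilon$ produces a finite $Y \subset \mathcal{C}$ with $G \subseteq \cup_{y \in Y} B(y,\epsilon)$. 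Taking complements, $\{\xi \notin \cup_y B(y,\epsilon)\} \subseteq \{\|\xi\|_\infty > M\} \cup \{\sup_{|s-t|<\delta}|\xi(s) - \xi(t)| \geq \epsilon'\}$, so by subadditivity of $\mathbb{P}$ and of the supremum over $\xi$,
\[
\inf_Y \sup_{\xi} \mathbb{P}(\xi \notin \cup_y B(y,\epsilon)) \leq \sup_\xi \mathbb{P}(\|\xi\|_\infty > M) + \sup_\xi \mathbb{P}\Big(\sup_{|s-t|<\delta}|\xi(s) - \xi(t)| \geq \epsilon'\Big).
\]
Now select $M$ nearly realizing $\mu_{\text{sub}}(\Xi)$ and $\delta$ nearly realizing $\inf_\delta \sup_\xi \mathbb{P}(\,\cdot\, \geq \epsilon')$, which is dominated by $\mu_{\text{suec}}(\Xi)$ for every $\epsilon'$; letting the approximation parameters tend to zero and then taking the supremum over $\epsilon$ gives $\mu_{\text{ut}}(\Gamma) \leq \mu_{\text{sub}}(\Xi) + \mu_{\text{suec}}(\Xi)$. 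The delicate point is the bookkeeping of the nested quantifiers: one must ensure that $M$ and $\delta$ may be chosen independently so that $G = G(M,\delta,\epsilon')$ is genuinely totally bounded at scale $\epsilon$, and observe that the constant $(N/(2N+2))^{1/2}$ is harmless, since $\mu_{\text{suec}}$ already ranges over all oscillation thresholds $\epsilon'$ and hence absorbs the rescaling, so the constant disappears from the final estimate.
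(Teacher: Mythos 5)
Your proof is correct, and its skeleton---reducing via Theorem \ref{thm:qProkh} to the comparison of $\mu_{\text{ut}}$ with $\mu_{\text{sub}}$ and $\mu_{\text{suec}}$, then obtaining the upper bound from the nontrivial half of Theorem \ref{thm:qAA} applied to a uniformly bounded set of small oscillation, followed by a union bound---is the same as the paper's. The differences are worth noting. For the upper bound, the paper routes through three characterization lemmas (Lemmas \ref{lem:characMut}, \ref{lem:characMsub} and \ref{lem:characMsuec}) and intersects the two ``good sets'' they supply; your sublevel set $G(M,\delta,\epsilon')$ is exactly the canonical instance of that intersection, and your observation that the constant $(N/(2N+2))^{1/2}$ is absorbed by the free oscillation threshold $\epsilon'$ is precisely the paper's rescaling in (\ref{eq:muecZ}), so there you have essentially inlined the paper's argument. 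Where you genuinely diverge is the lower bound $\max\{\mu_{\text{sub}}(\Xi),\mu_{\text{suec}}(\Xi)\}\leq\mu_{\text{ut}}(\Xi)$: the paper deduces it from the easy half of Theorem \ref{thm:qAA}, namely $\tfrac{1}{2}\mu_{\text{uec}}\leq\mu_{\text{H},\infty}$, applied to the set furnished by Lemma \ref{lem:characMut}, whereas you argue by direct inclusion of events, exploiting that the finitely many centers of an $\epsilon$-net are themselves uniformly bounded and uniformly equicontinuous. Your version is more elementary (it never invokes Theorem \ref{thm:qAA} in this direction, in effect re-proving its easy half at the level of events); the paper's is shorter once Theorem \ref{thm:qAA} is in hand, and its lemma-based formulation isolates reusable equivalences. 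Finally, the ``delicate point'' you flag is indeed handled correctly in your sketch: $M$ and $\delta$ may be chosen independently of one another, and the inequality $\inf_{\delta>0}\sup_{\xi\in\Xi}\mathbb{P}\bigl(\sup_{|s-t|<\delta}|\xi(s)-\xi(t)|\geq\epsilon'\bigr)\leq\mu_{\text{suec}}(\Xi)$ holds for every fixed $\epsilon'>0$, which is exactly what makes the Jung constant disappear from the final estimate.
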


\section{Proof of Theorem \ref{thm:qProkh}}\label{sec:qProkh}

For a collection $\Gamma \subset \mathcal{P}(S)$, put
$$p_\Gamma = \sup_{\lambda \in \mathbb{R}^+_0}\mu_{\text{\upshape{H}},\rho_\lambda}(\Gamma)$$
and 
$$t_\Gamma = \mu_{\text{\upshape ut}}(\Gamma).$$

We first show that 
$$p_\Gamma \leq t_\Gamma$$ 
with an argument which essentially refines the first part of the proof of Theorem 4.9 in \cite{BLV11}.

Fix $\lambda \in \mathbb{R}^+_0$, $\epsilon > 0$, and choose pairwise disjoint Borel sets 
$$A_1, \ldots, A_n \subset S,$$ 
with diameter less than $\lambda \epsilon$, such that
$$\forall P \in \Gamma : P\left(S \setminus \cup_{i=1}^n A_i\right) \leq t_\Gamma + \epsilon/2.$$
Then, for each $i \in \{1,\ldots,n\}$, pick $x_i \in A_i$, and, assuming without loss of generality that $S \setminus \cup_{i=1}^n A_i$ is non-empty, $x_{n+1} \in S \setminus \cup_{i=1}^n A_i$. Finally, fix $m \in \mathbb{N}_0$ such that 
$$n/m \leq \epsilon/2,$$
and let $\Phi$ stand for the finite collection of Borel probability measures on $S$ of the form 
$$Q = \sum_{i=1}^{n+1} (k_i/m) \delta_{x_i},$$
where the $k_i$ range in $\{0,\ldots,m\}$ such that 
$$\sum_{i=1}^{n + 1} k_i = m,$$
and $\delta_{x_i}$ stands for the Dirac probability measure putting all its mass on $x_i$.

We now claim that
$$\forall P \in \Gamma, \exists Q \in \Phi : \rho_\lambda(P,Q) \leq t_\Gamma + \epsilon,$$
which finishes the proof of the desired inequality.

To prove the claim, take $P \in \Gamma$, and construct
$$Q = \sum_{i=1}^{n + 1} (k_i /m) \delta_{x_i}$$
in $\Phi$ such that 
$$P(A_i) \leq k_i/m + 1/m$$
for all $i \in \{1,\ldots,n\}$. For a Borel set $A \subset S$, let $I$ stand for the set of those $i \in \{1,\ldots,n\}$ for which $A_i \cap A$ is non-empty. Then we derive from the calculation
\begin{eqnarray*}
P(A) &\leq& P\left(\cup_{i \in I} A_i\right) + P(S \setminus \cup_{i=1}^n A_i)\\
&\leq& \sum_{i \in I} P(A_i) + t_\Gamma + \epsilon/2\\
&\leq& \sum_{i \in I} \left(k_i/m + 1/m\right) + t_\Gamma + \epsilon/2\\
&\leq& Q\left(\cup_{i \in I} A_i\right) + n/m + t_\Gamma + \epsilon/2\\
&\leq& Q\left(A^{(\lambda(t_\Gamma + \epsilon))}\right) + t_\Gamma + \epsilon\\
\end{eqnarray*}
that 
$$\rho_\lambda(P,Q) \leq t_\Gamma + \epsilon,$$
establishing the claim.

We now show that 
$$t_\Gamma \leq p_\Gamma.$$

Fix $\epsilon, \epsilon^\prime > 0$. Choose $\lambda \in \mathbb{R}^+_0$  such that
$$\lambda \left(p_\Gamma + \epsilon^\prime/2\right) \leq \epsilon/2,$$
and take a finite collection $\Phi \subset \mathcal{P}(S)$ such that for each $P \in \Gamma$ there exists $Q \in \Phi$ for which
$$ \rho_{\lambda}(P,Q) \leq  \mu_{\text{\upshape H},\rho_{\lambda}}(\Gamma)+ \epsilon^\prime/2 \leq p_\Gamma + \epsilon^\prime/2.$$ 
The collection $\Phi$ being finite, we can pick a finite set $Y \subset S$ such that 
$$\forall Q \in \Phi : Q \left(S \setminus \cup_{y \in Y} B(y,\epsilon/2)\right) \leq \epsilon^\prime/2.$$

We claim that
$$\forall P \in \Gamma : P\left(S \setminus \cup_{y \in Y} B(y,\epsilon)\right) \leq p_\Gamma + \epsilon^\prime,$$
proving the desired inequality. 

To establish the claim, take $P \in \Gamma$, and let  $Q$ be a probability measure in $\Phi$ such that 
$$\rho_{\lambda}(P,Q) \leq p_\Gamma + \epsilon^\prime/2.$$
But then
\begin{eqnarray*} 
\lefteqn{P\left(S \setminus \cup_{y \in Y} B(y,\epsilon)\right)}\\
&\leq& Q\left(\left(S \setminus \cup_{y \in Y} B(y,\epsilon)\right)^{\left(\lambda\left(p_\Gamma+\epsilon^\prime/2\right)\right)}\right) + p_\Gamma + \epsilon^\prime/2\\
&\leq& Q\left(\left(S \setminus \cup_{y \in Y} B(y,\epsilon)\right)^{(\epsilon/2)}\right) + p_\Gamma + \epsilon^\prime/2\\
&\leq& Q\left(S \setminus \cup_{y \in Y} B(y,\epsilon/2)\right) + p_\Gamma + \epsilon^\prime/2\\
&\leq& p_\Gamma + \epsilon^\prime,
\end{eqnarray*}
which finishes the proof of the claim.

\section{Proof of Theorem \ref{thm:qAA}}\label{sec:qAA}

Before writing down the proof of Theorem \ref{thm:qAA}, we give the required preparation.

For a bounded set $A \subset \mathbb{R}^N$, the diameter is given by
$$\text{\upshape{diam}}(A) = \sup_{x,y \in A} \left|x-y\right|,$$
and the Chebyshev radius by
$$r(A) = \myinf_{x \in \mathbb{R}^N} \sup_{y \in A} \left|x - y\right|.$$
It is well known that for each bounded set $A \subset \mathbb{R}^N$ there exists a unique $x_A \in \mathbb{R}^N$ such that 
\begin{displaymath}
\sup_{y \in A} \left|x_A - y\right| = r(A).
\end{displaymath}
The point $x_A$ is called the Chebyshev center of $A$. A good exposition of the previous notions in a general normed vector space can be found in \cite{H72}, Section 33.

Theorem \ref{thm:Jung} provides a relation between the diameter and the Chebyshev radius of a bounded set in $\mathbb{R}^N$. A beautiful proof can be found in \cite{BW41}. For extensions of the result, we refer to \cite{A85}, \cite{AFS00}, \cite{R02}, and \cite{NN06}.

\begin{thm}[Jung]\label{thm:Jung}
Let $A \subset \mathbb{R}^N$ be a bounded set. Then
$$\frac{1}{2} \text{\upshape{diam}}(A) \leq r(A) \leq \left(\frac{N}{2N + 2}\right)^{1/2} \textrm{\upshape{diam}}(A).$$
\end{thm}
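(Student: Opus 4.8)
The plan is to prove the two inequalities separately, after first reducing to the case where $A$ is compact. Since $\mathrm{diam}(A) = \mathrm{diam}(\overline{A})$ and $r(A) = r(\overline{A})$, and a closed bounded subset of $\mathbb{R}^N$ is compact, I may assume throughout that $A$ is compact and non-empty; in particular the Chebyshev center $x_A$ exists. The left-hand inequality is the easy half: writing $r = r(A)$, for any $y, z \in A$ the triangle inequality gives $|y - z| \leq |y - x_A| + |x_A - z| \leq 2r$, and taking the supremum over $y, z \in A$ yields $\mathrm{diam}(A) \leq 2 r(A)$.

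For the right-hand inequality, the key geometric fact I would establish is that the Chebyshev center lies in the convex hull of the farthest points. Precisely, let $F = \{a \in A \mid |a - x_A| = r\}$ be the compact, non-empty set of points of $A$ realizing the Chebyshev radius; then I claim $x_A \in \mathrm{co}(F)$. Granting this, Carath\'eodory's theorem produces points $a_1, \ldots, a_k \in F$ with $k \leq N + 1$ and weights $\lambda_1, \ldots, \lambda_k \geq 0$ summing to $1$ such that $x_A = \sum_{i=1}^k \lambda_i a_i$.

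The estimate then follows from a short computation. Setting $b_i = a_i - x_A$, I have $|b_i| = r$ and $\sum_{i=1}^k \lambda_i b_i = 0$. Expanding $0 = |\sum_i \lambda_i b_i|^2$ and using $\langle b_i, b_j \rangle = r^2 - \tfrac{1}{2}|a_i - a_j|^2$ together with $(\sum_i \lambda_i)^2 = 1$, one arrives at the identity $r^2 = \tfrac{1}{2} \sum_{i \neq j} \lambda_i \lambda_j |a_i - a_j|^2$. Bounding $|a_i - a_j| \leq \mathrm{diam}(A) =: D$ and using $\sum_{i \neq j} \lambda_i \lambda_j = 1 - \sum_i \lambda_i^2 \leq 1 - 1/k \leq N/(N+1)$ (the middle step being Cauchy--Schwarz, the last being $k \leq N+1$) gives $r^2 \leq \tfrac{1}{2} D^2 \cdot N/(N+1)$, which is exactly the claimed bound after taking square roots.

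The main obstacle is the geometric claim $x_A \in \mathrm{co}(F)$. I would argue by contradiction using strict separation: if $x_A \notin \mathrm{co}(F)$, then since $\mathrm{co}(F)$ is compact and convex there is a direction $v$ with $\langle v, a - x_A \rangle \geq \gamma > 0$ for all $a \in F$. I would then show that perturbing the center to $x_A + t v$ strictly decreases the maximal distance for small $t > 0$, contradicting the minimality of $r$. The delicate point is controlling points of $A$ whose distance to $x_A$ is near but below $r$: one takes, for each small $t$, a point $a_t \in A$ attaining $\sup_{a \in A}|a - x_A - tv|$, extracts a convergent subsequence $a_{t_n} \to a_*$ by compactness, shows $a_* \in F$, and deduces from $\langle v, a_* - x_A\rangle \geq \gamma$ that $|a_{t_n} - x_A - t_n v| < r$ for large $n$ --- the desired contradiction.
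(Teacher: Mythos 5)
Your proof is correct, but note that there is nothing in the paper to compare it against: the paper does not prove Jung's Theorem at all, quoting it as a classical result and delegating the proof to Blumenthal and Wahlin \cite{BW41}. Your argument is the standard modern proof and stands as a valid self-contained substitute. The reduction to compact $A$, and the easy half $\mathrm{diam}(A) \leq 2r(A)$ via the triangle inequality through the center, are fine. For the sharp half, your computation checks out: with $x_A = \sum_i \lambda_i a_i$, $a_i$ in the farthest-point set $F$, $k \leq N+1$ by Carath\'eodory, the expansion of $0 = \bigl|\sum_i \lambda_i (a_i - x_A)\bigr|^2$ with $\langle b_i, b_j\rangle = r^2 - \tfrac12|a_i - a_j|^2$ gives $r^2 = \tfrac12\sum_{i \neq j}\lambda_i\lambda_j |a_i - a_j|^2 \leq \tfrac12 D^2\bigl(1 - \sum_i \lambda_i^2\bigr) \leq \tfrac12 D^2 \bigl(1 - \tfrac1k\bigr) \leq \tfrac{N}{2N+2}D^2$, exactly the stated constant. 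The one genuinely delicate step, $x_A \in \mathrm{co}(F)$, is also handled soundly: if a unit vector $v$ strictly separates, i.e. $\langle v, a - x_A\rangle \geq \gamma > 0$ on $F$, then taking maximizers $a_t$ of $a \mapsto |a - x_A - tv|$, a subsequential limit $a_*$ satisfies $|a_* - x_A| \geq r$ (each $a_t$ witnesses the minimality of $r$) and $\leq r$ (as $a_* \in A$), so $a_* \in F$; then $\langle v, a_{t_n} - x_A\rangle \geq \gamma/2$ for large $n$ forces $|a_{t_n} - x_A - t_n v|^2 \leq r^2 - t_n\gamma + t_n^2 < r^2$ for small $t_n$, a contradiction. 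Compared with the reference the paper cites, your route buys a short, fully elementary argument (separation plus a quadratic identity) at the cost of not exhibiting the extremal configuration; the regular $N$-simplex shows your constant is sharp, which neither the paper nor your proof needs, but which explains why no better constant could be extracted from this computation.
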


We need two more simple lemmas on linear interpolation.

For $c_0 \in \mathbb{R}^N$ and $r \in \mathbb{R}_0^+$, we denote the closed ball with center $c_0$ and radius $r$ by $B^\star(c_0,r)$.

\begin{lem}\label{lem:Tech1}
Consider $c_1,c_2 \in \mathbb{R}^N$ and $r \in \mathbb{R}^+_0$, and assume that 
$$B^\star(c_1,r) \cap B^\star(c_2,r) \neq \emptyset.$$ 
Let $L$ be the $\mathbb{R}^N$-valued map on the compact interval $\left[\alpha,\beta\right]$ defined by 
$$L(t) = \frac{\beta - t}{\beta - \alpha} c_1 + \frac{t - \alpha}{\beta - \alpha} c_2.$$
Then, for all $t \in \left[\alpha,\beta\right]$ and $y \in B^\star(c_1,r) \cap B^\star(c_2,r)$,
$$\left|L(t) - y\right| \leq r.$$
\end{lem}

\begin{proof}
The calculation
\begin{eqnarray*}
\left|L(t) - y\right| &=& \left|\frac{\beta - t}{\beta - \alpha} (c_1 - y) + \frac{t - \alpha}{\beta - \alpha} (c_2 - y)\right|\\
&\leq& \frac{\beta - t}{\beta - \alpha} \left|c_1 - y\right| + \frac{t - \alpha}{ \beta - \alpha} \left|c_2 - y\right|\\
&\leq& \frac{\beta - t}{\beta - \alpha} r + \frac{t - \alpha}{\beta - \alpha} r\\
&=& r
\end{eqnarray*}
proves the lemma.
\end{proof}

\begin{lem}\label{lem:Tech2}
Consider $c_1,c_2,y_1,y_2 \in \mathbb{R}^N$ and $\epsilon > 0$, and suppose that 
$$\left|c_1 - y_1\right| \leq \epsilon$$ 
and 
$$\left|c_2 - y_2\right| \leq \epsilon.$$
Let $L$ and $M$ be the $\mathbb{R}^N$-valued maps on the compact interval $\left[\alpha,\beta\right]$ defined by
\begin{displaymath}
L(t) = \frac{\beta - t}{\beta - \alpha} c_1 + \frac{t - \alpha}{\beta - \alpha} c_2
\end{displaymath}
and
\begin{displaymath}
M(t) = \frac{\beta - t}{\beta - \alpha} y_1 + \frac{t - \alpha}{\beta - \alpha} y_2.
\end{displaymath}
Then 
\begin{displaymath}
\|L - M\|_\infty \leq \epsilon.
\end{displaymath}
\end{lem}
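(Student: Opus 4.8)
The plan is to mimic the proof of Lemma \ref{lem:Tech1}, exploiting the linearity of $L - M$ and the convexity of the norm. First I would observe that since both $L$ and $M$ are defined by the same convex-combination coefficients, their difference is again a linear interpolation, namely
$$L(t) - M(t) = \frac{\beta - t}{\beta - \alpha} (c_1 - y_1) + \frac{t - \alpha}{\beta - \alpha} (c_2 - y_2).$$
This is the key algebraic simplification: the interpolation weights factor out cleanly because $L$ and $M$ share them.

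Next I would apply the triangle inequality to this expression, using that for $t \in [\alpha,\beta]$ the coefficients $(\beta - t)/(\beta - \alpha)$ and $(t - \alpha)/(\beta - \alpha)$ are nonnegative and sum to $1$. This yields
$$\left|L(t) - M(t)\right| \leq \frac{\beta - t}{\beta - \alpha} \left|c_1 - y_1\right| + \frac{t - \alpha}{\beta - \alpha} \left|c_2 - y_2\right|.$$
Invoking the two hypotheses $\left|c_1 - y_1\right| \leq \epsilon$ and $\left|c_2 - y_2\right| \leq \epsilon$, the right-hand side is bounded above by the convex combination of $\epsilon$ with itself, which is exactly $\epsilon$.

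Finally, since this bound $\left|L(t) - M(t)\right| \leq \epsilon$ holds uniformly for every $t \in [\alpha,\beta]$, I would take the supremum over $t$ to conclude $\|L - M\|_\infty \leq \epsilon$, which is the assertion. I do not anticipate any genuine obstacle here: the entire argument is a one-line computation once the difference is recognized as a linear interpolation of the two error vectors, and the estimate is structurally identical to the one in Lemma \ref{lem:Tech1}. If anything warrants a moment of care, it is merely recording that the coefficients are a valid convex combination on the closed interval, so that the triangle-inequality step is legitimate.
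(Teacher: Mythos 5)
Your proof is correct and is exactly the argument the paper intends: the paper's proof of this lemma is simply the remark that it is ``analogous to the proof of Lemma \ref{lem:Tech1}'', and your computation---rewriting $L(t)-M(t)$ as the convex combination $\frac{\beta-t}{\beta-\alpha}(c_1-y_1)+\frac{t-\alpha}{\beta-\alpha}(c_2-y_2)$, applying the triangle inequality, and taking the supremum over $t$---is precisely that analogous argument spelled out.
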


\begin{proof}
This is analogous to the proof of Lemma \ref{lem:Tech1}.
\end{proof}

\begin{proof}[Proof of Theorem \ref{thm:qAA}]

We first prove that 
$$\frac{1}{2} \mu_{\textrm{\upshape{uec}}}\left(\mathcal{X}\right) \leq  \mu_{\text{\upshape{H}},\infty}\left(\mathcal{X}\right).$$ 
Let $\alpha > 0$ be so that $\mu_{\text{\upshape{H}},\infty}\left(\mathcal{X}\right) < \alpha$. Then there exists a finite set $\mathcal{F} \subset \mathcal{C}$ such that for all $x \in \mathcal{X}$ there exists $y \in \mathcal{F}$ for which $\|y - x\|_\infty \leq \alpha$. Take $\epsilon > 0$. Since $\mathcal{F}$ is uniformly equicontinuous, there exists $\delta > 0$ so that
\begin{equation} 
\forall y \in \mathcal{F}, \forall s, t \in \left[0,1\right] : \left|s - t\right| < \delta \Rightarrow \left|y(s) - y(t)\right| \leq \epsilon.\label{eq:FzEC}
\end{equation}
Now, for $x \in \mathcal{X}$, choose $y \in \mathcal{F}$ such that 
\begin{equation}
\|y - x\|_\infty \leq \alpha.\label{eq:2gCloseTof}
\end{equation}
Then, for $s , t \in \left[0,1\right]$ with $\left|s - t\right| < \delta$, we have, by (\ref{eq:FzEC}) and (\ref{eq:2gCloseTof}),
$$\left|x(s) - x(t)\right| \leq \left|x(s) - y(s)\right| + \left|y(s) - y(t)\right| + \left|y(t) - x(t)\right| \leq 2 \alpha + \epsilon,$$
which, by the arbitrariness of $\epsilon$, reveals that  $\mu_{\textrm{\upshape{uec}}}\left(\mathcal{X}\right) \leq 2\alpha$, and thus, by the arbitrariness of $\alpha$, the inequality 
$$\frac{1}{2} \mu_{\textrm{\upshape{uec}}}\left(\mathcal{X}\right) \leq \mu_{\text{\upshape{H}},\infty}\left(\mathcal{X}\right)$$
holds.

Next, assume that $\mathcal{X} \subset \mathcal{C}$ is uniformly bounded. We show that 
$$\mu_{\text{\upshape{H}},\infty}(\mathcal{X}) \leq \left(\frac{N}{2N + 2}\right)^{1/2} \mu_{\text{\upshape{uec}}}(\mathcal{X}).$$
Fix $\epsilon > 0$. Then, $\mathcal{X}$ being uniformly bounded, we can take a constant $M> 0$ such that 
\begin{equation}
\forall x \in \mathcal{X}, \forall t \in \left[0,1\right] : \left|x(t)\right| \leq M.\label{bdd}
\end{equation}
Pick a finite set $Y \subset \mathbb{R}^N$ for which
\begin{equation}
\forall z \in B^\star(0,3M), \exists y \in Y : \left|y - z\right| \leq \epsilon.\label{YDense}
\end{equation}
Now let 
\begin{equation}
0 < \alpha \leq 2M\label{eq:choiceAlpha}
\end{equation}
be so that $\mu_{\textrm{\upshape{uec}}}(\mathcal{X}) < \alpha$, i.e. there exists $\delta > 0$ for which 
\begin{equation}
\forall x \in \mathcal{X}, \forall s,t \in \left[0,1\right] : \left|s - t\right| <  \delta \Rightarrow \left|x(s) - x(t)\right| \leq \alpha.\label{eq:eqct}
\end{equation}
Then choose points
$$0 = t_0 < t_1 < \ldots < t_{2n} < t_{2n + 1} = 1,$$
put
\begin{eqnarray*}
I_0 &=& \left[0,t_2\right[,\\
I_k &=& \left]t_{2k - 1}, t_{2k + 2}\right[ \textrm{ if } k \in \left\{1,\ldots,n-1\right\},\\
I_n &=& \left]t_{2n - 1}, 1\right],
\end{eqnarray*}
and assume that we have made this choice such that
\begin{equation}
\forall k \in \left\{0, \ldots, n\right\} : \text{\upshape{diam}}(I_k) < \delta.\label{eq:diam}
\end{equation}
Furthermore, for each $(y_0, \ldots, y_{2n+ 1}) \in Y^{2n+2}$, let  $L_{(y_0, \ldots, y_{2n+1})}$ be the $\mathbb{R}^N$-valued map on $\left[0,1\right]$ defined by
\begin{eqnarray*}
L_{(y_0, \ldots, y_{2n+1})}(t) = \left\{\begin{array}{clrrrrrrrrrrrrrr}      
\frac{t_1 - t}{t_1 - t_0} y_0 + \frac{t - t_0}{t_1 - t_0} y_1 &\textrm{if}&  t \in \left[t_0,t_1\right]\\       
\vdots\\
\frac{t_{k+1} - t}{t_{k+1} - t_k} y_k + \frac{t - t_k}{t_{k+1} - t_k} y_{k + 1} & \textrm{if}& t \in \left[t_{k},t_{k+1}\right]\\
\vdots\\
\frac{t_{2n+1} - t}{t_{2n+1} - t_{2n}} y_{2n} + \frac{t - t_{2n}}{t_{2n+1} - t_{2n}} y_{2n + 1} &\textrm{if}& t \in \left[t_{2n},t_{2n + 1}\right]
\end{array}\right.,
\end{eqnarray*}
and put
\begin{displaymath}
\mathcal{F} = \left\{L_{(y_0,\ldots,y_{2n+1})} \mid (y_0, \ldots, y_{2n + 1}) \in Y^{2n + 2}\right\}.
\end{displaymath}
Then $\mathcal{F}$ is a finite subset of $\mathcal{C}$. Now fix $x \in \mathcal{X}$ and let $c_{x,k}$ stand for the Chebyshev center of $x(I_k)$ for each $k \in \left\{0,\ldots, n\right\}$. It follows from (\ref{eq:eqct}) and (\ref{eq:diam}) that $\textrm{\upshape{diam}}f(I_k) \leq \alpha$, and thus, by Theorem \ref{thm:Jung}, 
\begin{equation}
\forall k \in \left\{0,\ldots,n\right\} : \sup_{t \in I_k} \left|c_{x,k} - x(t)\right| \leq \left(\frac{N}{2N + 2}\right)^{1/2} \alpha.\label{eq:JApp}
\end{equation}
Let $\widetilde{x}$ be the $\mathbb{R}^N$-valued map on $\left[0,1\right]$ defined by
\begin{displaymath}
\widetilde{x}(t) = \left\{\begin{array}{clrrrrrrrrrrrrrr}      
c_{x,0}& \textrm{if}&  t \in \left[t_0,t_1\right]\\       
\frac{t_2 - t}{t_2 - t_1} c_{x,0}+ \frac{t - t_1}{t_2 - t_1} c_{x,1}& \textrm{if}& t \in \left[t_1,t_2\right]\\
c_{x,1} & \textrm{if}& t \in \left[t_2,t_3\right[\\
\frac{t_4 - t}{t_4 - t_3} c_{x,1}+ \frac{t - t_3}{t_4 - t_3} c_{x,2} &\textrm{if}& t \in \left[t_3,t_4\right]\\
\vdots\\
\frac{t_{2k} - t}{t_{2k} - t_{2k-1}} c_{x,k-1}+ \frac{t - t_{2k-1}}{t_{2k} - t_{2k-1}} c_{x,k} & \textrm{if}& t \in \left[t_{2k-1},t_{2k}\right]\\
c_{x,k} & \textrm{if}& t \in \left[t_{2k},t_{2k+1}\right]\\
\frac{t_{2 k + 2} - t}{t_{2 k + 2} - t_{2 k + 1}} c_{x,k}+ \frac{t - t_{2 k + 1}}{t_{2 k + 2} - t_{2 k + 1}} c_{x,k+1} &\textrm{if}& t \in \left[t_{2k+1},t_{2k+2}\right]\\
\vdots\\
\frac{t_{2 n - 2} - t}{t_{2 n - 2} - t_{2 n - 3}} c_{x,n - 2}+ \frac{t - t_{2n - 3}}{t_{2n - 2} - t_{2n - 3}} c_{x,n - 1} &\textrm{if}& t \in \left[t_{2n-3},t_{2n-2}\right]\\
c_{x,n-1} & \textrm{if}& t \in \left[t_{2n-2},t_{2n-1}\right]\\
\frac{t_{2 n} - t}{t_{2 n} - t_{2 n - 1}} c_{x,n-1}+ \frac{t - t_{2 n -1}}{t_{2 n} - t_{2 n - 1}} c_{x,n}  &\textrm{if}& t \in \left[t_{2n-1},t_{2n}\right]\\
c_{x,n} & \textrm{if}& t \in \left[t_{2n},t_{2n+1}\right]
\end{array}\right..
\end{displaymath}
Then (\ref{eq:JApp}) and Lemma \ref{lem:Tech1} learn that 
\begin{equation}
\|\widetilde{x} - x\|_{\infty} \leq \left(\frac{N}{2N + 2}\right)^{1/2} \alpha.\label{ftcf} 
\end{equation} 
Also, it easily follows from (\ref{bdd}), (\ref{eq:choiceAlpha}), and (\ref{ftcf}) that $\|\widetilde{x}\|_{\infty} \leq 3M$, and thus (\ref{YDense}) allows us to choose $\left(y_0, \ldots, y_{2n + 1}\right) \in Y^{2n + 2}$ such that 
\begin{equation}
\forall k \in \left\{0,\ldots, 2n + 1\right\} : \left|y_k - \widetilde{x}(t_k)\right| \leq \epsilon.\label{ykc}
\end{equation}
Combining (\ref{ykc}) and Lemma \ref{lem:Tech2} reveals that
\begin{equation}
\|L_{(y_0,\ldots,y_{2n + 1})} - \widetilde{x}\|_{\infty} \leq \epsilon.\label{ltft}
\end{equation}
But then we have found $L_{(y_0,\ldots,y_{2n + 1})}$ in $\mathcal{F}$ for which, by (\ref{ftcf}) and (\ref{ltft}),
\begin{displaymath}
\|L_{(y_0,\ldots,y_{2n+1})} - x\|_\infty \leq \left(\frac{N}{2N + 2}\right)^{1/2} \alpha + \epsilon
\end{displaymath}
which, by the arbitrariness of $\epsilon$, entails that $\mu_{\textrm{\upshape{H}},\infty}(\mathcal{F}) \leq \left(\frac{N}{2N + 2}\right)^{1/2} \alpha$, and thus, by the arbitrariness of $\alpha$, the inequality 
$$\mu_{\text{\upshape{H}},\infty}(\mathcal{X}) \leq  \left(\frac{N}{2N + 2}\right)^{1/2} \mu_{\text{\upshape{uec}}}(\mathcal{X})$$
is established.
\end{proof}

\section{Proof of Theorem \ref{thm:qsAA}}\label{sec:qsAA}

We transport the measure of non-uniform tightness from $\mathcal{P}(\mathcal{C})$ to the collection of csp's via their probability distributions. Thus, for a set $\Xi$ of csp's,
$$\mu_{\text{\upshape{ut}}}(\Xi) = \sup_{\epsilon > 0} \myinf_{\mathcal{F}} \sup_{\xi \in \Xi} \mathbb{P}\left(\xi \notin \cup_{y \in \mathcal{F}} B_\infty(y,\epsilon)\right),$$ 
where the infimum is taken over all finite sets $\mathcal{F} \subset \mathcal{C}$, and 
$$B_\infty(y,\epsilon) = \{x \in \mathcal{C} \mid \|y -x\|_\infty < \epsilon\}.$$

Before giving the proof of Theorem \ref{thm:qsAA}, we state three lemmas, which are easily seen to follow from the definitions.

\begin{lem}\label{lem:characMut}
Let $\Xi$ be a collection of csp's and $\alpha \in \mathbb{R}^+_0$. Then the following assertions are equivalent.
\begin{enumerate}
\item $\mu_{\text{\upshape{ut}}}(\Xi) < \alpha.$
\item For each $\epsilon > 0$ there exists a uniformly bounded set $\mathcal{X} \subset \mathcal{C}$ such that 
\begin{enumerate}
\item $\mu_{\text{\upshape{H}},\infty}(\mathcal{X}) < \epsilon,$
\item $\forall \xi \in \Xi : \mathbb{P}(\xi \notin \mathcal{X}) < \alpha.$
\end{enumerate}
\end{enumerate}
\end{lem}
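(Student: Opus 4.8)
The plan is to prove the two implications separately, the geometric crux in both being that a finite union of uniform-norm balls $\cup_{y \in \mathcal{F}} B_\infty(y,\epsilon)$ is the prototype of a uniformly bounded set with small Hausdorff measure of non-compactness, while, conversely, any uniformly bounded $\mathcal{X}$ with $\mu_{\text{\upshape{H}},\infty}(\mathcal{X}) < \epsilon$ is contained in such a union. Everything then reduces to rewriting the defining suprema and infima of $\mu_{\text{\upshape{ut}}}$ and $\mu_{\text{\upshape{H}},\infty}$ against one another.

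For $(1) \Rightarrow (2)$, I would fix $\epsilon > 0$. Since $\mu_{\text{\upshape{ut}}}(\Xi) < \alpha$, the radius-$\epsilon/2$ term of the defining supremum is below $\alpha$, so there is a finite set $\mathcal{F} \subset \mathcal{C}$ with $\sup_{\xi \in \Xi} \mathbb{P}(\xi \notin \cup_{y \in \mathcal{F}} B_\infty(y,\epsilon/2)) < \alpha$. Setting $\mathcal{X} = \cup_{y \in \mathcal{F}} B_\infty(y,\epsilon/2)$, I would verify the three requirements. Uniform boundedness is immediate, as each $y \in \mathcal{F}$ is bounded on $[0,1]$ and every $x \in \mathcal{X}$ obeys $\|x\|_\infty \leq \max_{y \in \mathcal{F}} \|y\|_\infty + \epsilon/2$. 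Testing $\mu_{\text{\upshape{H}},\infty}(\mathcal{X})$ against the finite set $\mathcal{F}$ gives $\mu_{\text{\upshape{H}},\infty}(\mathcal{X}) \leq \epsilon/2 < \epsilon$, which is $(a)$, and $(b)$ is precisely the bound defining $\mathcal{F}$.

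For $(2) \Rightarrow (1)$, I would fix $\epsilon > 0$ and apply $(2)$ at radius $\epsilon$ to obtain a uniformly bounded $\mathcal{X}$ with $\mu_{\text{\upshape{H}},\infty}(\mathcal{X}) < \epsilon$ and $\mathbb{P}(\xi \notin \mathcal{X}) < \alpha$ for all $\xi \in \Xi$. From $\mu_{\text{\upshape{H}},\infty}(\mathcal{X}) < \epsilon$ I would extract a finite $\mathcal{F} \subset \mathcal{C}$ with $\sup_{x \in \mathcal{X}} \inf_{y \in \mathcal{F}} \|x - y\|_\infty < \epsilon$, which forces $\mathcal{X} \subset \cup_{y \in \mathcal{F}} B_\infty(y,\epsilon)$. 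Hence $\{\xi \notin \cup_{y \in \mathcal{F}} B_\infty(y,\epsilon)\} \subset \{\xi \notin \mathcal{X}\}$, so $\sup_{\xi \in \Xi} \mathbb{P}(\xi \notin \cup_{y \in \mathcal{F}} B_\infty(y,\epsilon)) \leq \sup_{\xi \in \Xi} \mathbb{P}(\xi \notin \mathcal{X}) \leq \alpha$, and the infimum over finite sets in the definition of $\mu_{\text{\upshape{ut}}}$ is at most $\alpha$ at radius $\epsilon$. Passing to the supremum over $\epsilon > 0$ then controls $\mu_{\text{\upshape{ut}}}(\Xi)$.

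Along the way I would record that $\cup_{y \in \mathcal{F}} B_\infty(y,\epsilon)$ and $\mathcal{X}$ are open, hence Borel, so that $\mathbb{P}(\xi \notin \cdot)$ is well defined for the Borel measurable csp's $\xi$. The step I expect to require the most care is not the geometry but the bookkeeping of the strict inequalities against the nested quantifiers: the outer supremum over the radius in $\mu_{\text{\upshape{ut}}}(\Xi)$ is monotone (a limit as the radius tends to $0$), so that a bound holding at every fixed radius transfers to $\mu_{\text{\upshape{ut}}}(\Xi)$, and one must watch whether the per-process bound $\mathbb{P}(\xi \notin \mathcal{X}) < \alpha$ is invoked pointwise in $\xi$ or only after passing to the supremum over $\Xi$. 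Matching the strict inequalities consistently across the two directions, rather than the underlying covering argument, is where the delicacy lies.
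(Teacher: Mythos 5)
The paper does not actually prove this lemma---it is one of the three statements declared to be ``easily seen to follow from the definitions''---and your unwinding of the two definitions against each other is clearly the intended argument; your direction $(1) \Rightarrow (2)$, with $\mathcal{X} = \cup_{y \in \mathcal{F}} B_\infty(y,\epsilon/2)$, is correct and complete. The genuine gap is in $(2) \Rightarrow (1)$, and it is exactly the point you flag in your final paragraph but never close: from $\mathbb{P}(\xi \notin \mathcal{X}) < \alpha$ for every single $\xi \in \Xi$ you may only conclude $\sup_{\xi \in \Xi} \mathbb{P}\left(\xi \notin \cup_{y \in \mathcal{F}} B_\infty(y,\epsilon)\right) \leq \alpha$, and a supremum over $\epsilon > 0$ of quantities that are $\leq \alpha$ is again only $\leq \alpha$. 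So what your argument actually proves is $\mu_{\text{\upshape ut}}(\Xi) \leq \alpha$, not the strict inequality demanded by assertion $(1)$; the phrase ``passing to the supremum over $\epsilon > 0$ then controls $\mu_{\text{\upshape ut}}(\Xi)$'' is precisely where the strictness is lost.

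Moreover, this is not a defect you can repair by better bookkeeping: the implication $(2) \Rightarrow (1)$ is false as stated. Take $\alpha = 1/2$, let $x_0 \in \mathcal{C}$ be the zero function, and for $n \geq 3$ let $\xi_n$ be the csp which equals the constant function $n$ with probability $1/2 - 1/n$ and equals $x_0$ otherwise; put $\Xi = \{\xi_n \mid n \geq 3\}$. Then $(2)$ holds for every $\epsilon > 0$ with the single choice $\mathcal{X} = \{x_0\}$: this set is uniformly bounded, $\mu_{\text{\upshape H},\infty}(\mathcal{X}) = 0 < \epsilon$, and $\mathbb{P}(\xi_n \notin \mathcal{X}) = 1/2 - 1/n < 1/2$ for every $n$. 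But for any $\epsilon > 0$ and any finite $\mathcal{F} \subset \mathcal{C}$, the set $\cup_{y \in \mathcal{F}} B_\infty(y,\epsilon)$ is bounded in the uniform norm and therefore excludes the constant function $n$ for all large $n$, whence $\sup_{\xi \in \Xi} \mathbb{P}\left(\xi \notin \cup_{y \in \mathcal{F}} B_\infty(y,\epsilon)\right) \geq \sup_n \left(1/2 - 1/n\right) = 1/2$; consequently $\mu_{\text{\upshape ut}}(\Xi) \geq 1/2 = \alpha$, and $(1)$ fails. (Lemmas \ref{lem:characMsub} and \ref{lem:characMsuec} suffer from the same strict-inequality defect.) The honest conclusion of your proof, namely that $(2)$ implies $\mu_{\text{\upshape ut}}(\Xi) \leq \alpha$, is the correct form of the lemma, and it is all that is needed where the lemma is used: in the proof of Theorem \ref{thm:qsAA} the implication is invoked for arbitrary $\alpha$ and $\beta$ strictly above $\mu_{\text{\upshape sub}}(\Xi)$ and $\mu_{\text{\upshape suec}}(\Xi)$, and the weakened bound $\mu_{\text{\upshape ut}}(\Xi) \leq \alpha + \beta$ still yields $\mu_{\text{\upshape ut}}(\Xi) \leq \mu_{\text{\upshape sub}}(\Xi) + \mu_{\text{\upshape suec}}(\Xi)$ upon letting $\alpha$ and $\beta$ decrease. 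A clean fix for the lemma itself is to replace $(1)$ by $\mu_{\text{\upshape ut}}(\Xi) \leq \alpha$ and, correspondingly, to relax the strict inequality in $(2)(b)$ to $\mathbb{P}(\xi \notin \mathcal{X}) \leq \alpha$.
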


\begin{lem}\label{lem:characMsub}
Let $\Xi$ be a collection of csp's and $\alpha \in \mathbb{R}^+_0$. Then the following assertions are equivalent.
\begin{enumerate}
\item $\mu_{\text{\upshape{sub}}}(\Xi) < \alpha.$
\item There exists a uniformly bounded set $\mathcal{X} \subset \mathcal{C}$ such that 
$$\forall \xi \in \Xi : \mathbb{P}(\xi \notin \mathcal{X}) < \alpha.$$
\end{enumerate}
\end{lem}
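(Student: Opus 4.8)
The plan is to build the equivalence around the single canonical uniformly bounded set, namely the closed uniform ball $\mathcal{X}_M = \{x \in \mathcal{C} \mid \|x\|_\infty \leq M\}$, and to exploit the identity $\{\xi \notin \mathcal{X}_M\} = \{\|\xi\|_\infty > M\}$. Both $\mu_{\text{\upshape{sub}}}(\Xi)$ and condition (2) are, at bottom, statements about the probability that a process has large uniform norm, so this ball is the natural bridge between them.

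For the implication $(1) \Rightarrow (2)$, I would first unwind the infimum in the definition of $\mu_{\text{\upshape{sub}}}$: the inequality $\mu_{\text{\upshape{sub}}}(\Xi) < \alpha$ means precisely that there is some $M \in \mathbb{R}^+_0$ with $\sup_{\xi \in \Xi} \mathbb{P}(\|\xi\|_\infty > M) < \alpha$. Taking $\mathcal{X} = \mathcal{X}_M$, which is visibly uniformly bounded, the above identity gives $\mathbb{P}(\xi \notin \mathcal{X}) = \mathbb{P}(\|\xi\|_\infty > M) < \alpha$ for every $\xi \in \Xi$, which is exactly (2).

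For $(2) \Rightarrow (1)$, I would start from a uniformly bounded witness $\mathcal{X}$ and extract a constant $M$ with $|x(t)| \leq M$ for all $x \in \mathcal{X}$ and $t \in [0,1]$, so that $\mathcal{X} \subseteq \mathcal{X}_M$. Then $\{\|\xi\|_\infty > M\} \subseteq \{\xi \notin \mathcal{X}\}$, and monotonicity of $\mathbb{P}$ yields $\mathbb{P}(\|\xi\|_\infty > M) \leq \mathbb{P}(\xi \notin \mathcal{X})$ for each $\xi$. Feeding this $M$ into the infimum defining $\mu_{\text{\upshape{sub}}}$ then bounds $\mu_{\text{\upshape{sub}}}(\Xi)$ by $\sup_{\xi \in \Xi} \mathbb{P}(\xi \notin \mathcal{X})$.

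The step I expect to be the real obstacle is recovering the \emph{strict} inequality $\mu_{\text{\upshape{sub}}}(\Xi) < \alpha$ in this last direction. The hypothesis in (2) supplies only the pointwise bounds $\mathbb{P}(\xi \notin \mathcal{X}) < \alpha$, and taking the supremum over $\xi \in \Xi$ degrades this to $\sup_{\xi} \mathbb{P}(\xi \notin \mathcal{X}) \leq \alpha$, so the argument above delivers $\mu_{\text{\upshape{sub}}}(\Xi) \leq \alpha$ rather than the strict inequality; indeed the supremum over $\Xi$ may fail to be attained and genuinely equal $\alpha$. To close the gap I would read the intended hypothesis as the uniform bound $\sup_{\xi \in \Xi} \mathbb{P}(\xi \notin \mathcal{X}) < \alpha$ (equivalently, a single $\beta < \alpha$ dominating all the $\mathbb{P}(\xi \notin \mathcal{X})$); with that reading the chain $\mu_{\text{\upshape{sub}}}(\Xi) \leq \sup_{\xi} \mathbb{P}(\|\xi\|_\infty > M) \leq \sup_{\xi} \mathbb{P}(\xi \notin \mathcal{X}) < \alpha$ goes through, while the $(1) \Rightarrow (2)$ direction is unaffected. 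I would therefore present the clean equivalence in that uniform form and flag the literal pointwise reading as the point needing care.
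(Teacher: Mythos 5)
Your argument is the intended one: the paper offers no proof of this lemma beyond the remark that the three lemmas of this section are ``easily seen to follow from the definitions'', and the bridge you use --- the closed ball $\mathcal{X}_M = \{x \in \mathcal{C} \mid \|x\|_\infty \leq M\}$ together with the identity $\{\xi \notin \mathcal{X}_M\} = \{\|\xi\|_\infty > M\}$ --- is exactly how the definition of $\mu_{\text{\upshape{sub}}}$ and the notion of uniform boundedness are meant to be matched. Your direction $(1) \Rightarrow (2)$ is correct as written, and your direction $(2) \Rightarrow (1)$ is correct up to the strictness issue you flag.

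That issue is real, and it is a defect of the lemma as stated rather than of your proof. With the literal pointwise reading of (2), the implication $(2) \Rightarrow (1)$ is false whenever $\alpha \leq 1$: on a nonatomic probability space choose events $A_n$ with $\mathbb{P}(A_n) = \alpha - 1/n$ for $n > 1/\alpha$, fix a unit vector $u \in \mathbb{R}^N$, let $\xi_n$ be the csp taking the constant value $n u$ on $A_n$ and the value $0$ off $A_n$, and put $\Xi = \{\xi_n \mid n > 1/\alpha\}$. The uniformly bounded set $\mathcal{X} = \{0\}$ witnesses (2), since $\mathbb{P}(\xi_n \notin \mathcal{X}) = \alpha - 1/n < \alpha$ for every $n$; yet for every $M \in \mathbb{R}^+_0$ one has $\sup_n \mathbb{P}(\|\xi_n\|_\infty > M) = \sup_{n > M}\left(\alpha - 1/n\right) = \alpha$, so $\mu_{\text{\upshape{sub}}}(\Xi) = \alpha$ and (1) fails. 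Your repair --- reading (2) as the uniform bound $\sup_{\xi \in \Xi} \mathbb{P}(\xi \notin \mathcal{X}) < \alpha$ --- is the right one: it makes the equivalence true by exactly your chain of inequalities, and it is the form needed in the proof of Theorem \ref{thm:qsAA}, where this lemma (and Lemmas \ref{lem:characMut} and \ref{lem:characMsuec}, which suffer from the same imprecision and admit the same fix) are invoked in both directions to produce strict inequalities. Alternatively, one could keep the pointwise (2) and weaken the backward conclusion to $\mu_{\text{\upshape{sub}}}(\Xi) \leq \alpha$; this weaker form still suffices for Theorem \ref{thm:qsAA}, since there $\alpha$ is always an arbitrary strict upper bound for the measure in question.
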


\begin{lem}\label{lem:characMsuec}
Let $\Xi$ be a collection of csp's and $\alpha \in \mathbb{R}^+_0$. Then the following assertions are equivalent.
\begin{enumerate}
\item $\mu_{\text{\upshape{suec}}}(\Xi) < \alpha.$
\item For each $\epsilon > 0$ there exists a set $\mathcal{X} \subset \mathcal{C}$ such that 
\begin{enumerate}
\item $\mu_{\text{\upshape{uec}}}(\mathcal{X}) < \epsilon,$
\item $\forall \xi \in \Xi : \mathbb{P}(\xi \notin \mathcal{X}) < \alpha.$
\end{enumerate}
\end{enumerate}
\end{lem}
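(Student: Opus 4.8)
The plan is to prove the two implications separately, working throughout with the modulus functional $\omega_\delta(x) = \sup_{|s-t| < \delta} |x(s) - x(t)|$ on $\mathcal{C}$. A preliminary observation I would record first is that $\omega_\delta$ is $2$-Lipschitz for $\|\cdot\|_\infty$, hence continuous, so that its sub- and superlevel sets are Borel and the events $\{\xi \in \mathcal{X}\}$ and $\{\omega_\delta(\xi) \geq c\}$ appearing below are genuinely $\mathbb{P}$-measurable. In this notation $\mu_{\mathrm{uec}}(\mathcal{X}) = \inf_{\delta > 0} \sup_{x \in \mathcal{X}} \omega_\delta(x)$ and $\mu_{\mathrm{suec}}(\Xi) = \sup_{\epsilon > 0} \inf_{\delta > 0} \sup_{\xi \in \Xi} \mathbb{P}(\omega_\delta(\xi) \geq \epsilon)$, and I would also note that the inner expression is non-increasing in $\epsilon$, so that the outer supremum is in fact the limit as $\epsilon \downarrow 0$.

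For (1)$\Rightarrow$(2), fix $\epsilon > 0$. Since $\mu_{\mathrm{suec}}(\Xi) < \alpha$, the inner infimum evaluated at level $\epsilon/2$ is already $< \alpha$, so I can select $\delta > 0$ with $\sup_{\xi \in \Xi} \mathbb{P}(\omega_\delta(\xi) \geq \epsilon/2) < \alpha$. The natural candidate is then the sublevel set $\mathcal{X} = \{x \in \mathcal{C} \mid \omega_\delta(x) < \epsilon/2\}$: every $x \in \mathcal{X}$ satisfies $\omega_\delta(x) < \epsilon/2$, so $\mu_{\mathrm{uec}}(\mathcal{X}) \leq \epsilon/2 < \epsilon$, which is (a); and $\{\xi \notin \mathcal{X}\} = \{\omega_\delta(\xi) \geq \epsilon/2\}$, so $\mathbb{P}(\xi \notin \mathcal{X}) < \alpha$ for every $\xi$, which is (b). The passage to $\epsilon/2$ is what secures the \emph{strict} inequality in (a).

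For (2)$\Rightarrow$(1), fix $\epsilon > 0$ and take the set $\mathcal{X} = \mathcal{X}_\epsilon$ furnished by (2). From $\mu_{\mathrm{uec}}(\mathcal{X}) < \epsilon$ I extract $\delta > 0$ with $\sup_{x \in \mathcal{X}} \omega_\delta(x) < \epsilon$, which yields the key inclusion $\{\omega_\delta(\xi) \geq \epsilon\} \subseteq \{\xi \notin \mathcal{X}\}$, since $\xi(\omega) \in \mathcal{X}$ forces $\omega_\delta(\xi(\omega)) < \epsilon$. Consequently $\inf_{\delta'} \sup_{\xi} \mathbb{P}(\omega_{\delta'}(\xi) \geq \epsilon) \leq \sup_{\xi} \mathbb{P}(\xi \notin \mathcal{X}_\epsilon)$, and letting $\epsilon$ range over $(0,\infty)$ one bounds $\mu_{\mathrm{suec}}(\Xi)$.

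The step I expect to be the main obstacle is the bookkeeping of the strict inequalities in this last direction. The hypothesis supplies only the pointwise bound $\mathbb{P}(\xi \notin \mathcal{X}_\epsilon) < \alpha$ for each individual $\xi$, whereas $\mu_{\mathrm{suec}}$ is assembled from the supremum over $\xi$ and then the supremum over $\epsilon$; a supremum of quantities each strictly below $\alpha$ need only be $\leq \alpha$, and the outer limit $\epsilon \downarrow 0$ can drive the bound up to $\alpha$. Thus the naive chaining delivers $\mu_{\mathrm{suec}}(\Xi) \leq \alpha$ rather than the asserted strict inequality, and the genuine work lies in tightening this boundary behaviour — either by exploiting that a set $\mathcal{X}_\epsilon$ is available for \emph{every} $\epsilon$ against the \emph{single} threshold $\alpha$, or by recasting the equivalence so that the strict bounds propagate cleanly. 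I would treat this as the crux, and once it is resolved the structurally identical Lemmas~\ref{lem:characMsub} and \ref{lem:characMut} follow by the same template, replacing $\omega_\delta$ by the norm $\|\cdot\|_\infty$ and by $\mu_{\mathrm{H},\infty}$ respectively.
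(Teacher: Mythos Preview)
The paper offers no proof beyond the remark that the lemma ``follows from the definitions,'' so your unwinding via the modulus $\omega_\delta$ is precisely the intended argument, and your proof of (1)$\Rightarrow$(2) is correct.

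Your hesitation about (2)$\Rightarrow$(1) is not a gap in your reasoning but a genuine defect in the statement: the strict inequality cannot be recovered, and the lemma is false as written. On $\Omega=[0,1]$ with Lebesgue measure and $\alpha=\tfrac12$, set $\xi_n(\omega)=\sin(2\pi n\,\cdot\,)$ for $\omega\in[0,\tfrac12-\tfrac1n)$ and $\xi_n(\omega)=0$ otherwise ($n\ge 3$). Then $\mathcal{X}=\{0\}$ witnesses (2) for every $\epsilon>0$, since $\mathbb{P}(\xi_n\neq 0)=\tfrac12-\tfrac1n<\tfrac12$; yet for any fixed $\epsilon\in(0,2)$ and any $\delta>0$ one has $\omega_\delta(\sin(2\pi n\,\cdot\,))\ge\epsilon$ for all sufficiently large $n$, so $\sup_n\mathbb{P}(\omega_\delta(\xi_n)\ge\epsilon)=\tfrac12$ and hence $\mu_{\mathrm{suec}}(\Xi)=\tfrac12$. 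Thus (2) holds while (1) fails.

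The honest repair is to weaken the conclusion of (2)$\Rightarrow$(1) to $\mu_{\mathrm{suec}}(\Xi)\le\alpha$, which is exactly what your chaining delivers. This is harmless for the paper: the only place this direction is invoked, in the proof of Theorem~\ref{thm:qsAA}, one takes $\alpha>\mu_{\mathrm{ut}}(\Xi)$ arbitrary and needs only $\mu_{\mathrm{suec}}(\Xi)\le\alpha$ to conclude $\mu_{\mathrm{suec}}(\Xi)\le\mu_{\mathrm{ut}}(\Xi)$. The same issue and the same fix apply verbatim to Lemmas~\ref{lem:characMut} and~\ref{lem:characMsub}.
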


\begin{proof}[Proof of Theorem \ref{thm:qsAA}]
Let $\Xi$ be a collection of csp's. By Theorem \ref{thm:qProkh}, 
$$\sup_{\lambda \in \mathbb{R}^+_0} \mu_{\text{\upshape{H}},\lambda} (\Xi) = \mu_{\text{\upshape{ut}}}(\Xi),$$
whence it suffices to show that
$$\max\{\mu_{\text{\upshape{sub}}}(\Xi), \mu_{\text{\upshape{suec}}}(\Xi)\} \leq \mu_{\text{\upshape{ut}}}(\Xi) \leq \mu_{\text{\upshape{sub}}}(\Xi) + \mu_{\text{\upshape{suec}}}(\Xi).$$

We first establish that
$$\mu_{\text{\upshape{ut}}}(\Xi) \leq \mu_{\text{\upshape{sub}}}(\Xi) + \mu_{\text{\upshape{suec}}}(\Xi).$$

Fix $\epsilon > 0$, and $\alpha,\beta \in \mathbb{R}^+_0$ such that
$$\mu_{\text{\upshape{sub}}}(\Xi) < \alpha$$
and
$$\mu_{\text{\upshape{suec}}}(\Xi) < \beta.$$
By Lemma \ref{lem:characMsub}, there exists a uniformly bounded set $\mathcal{Y} \subset \mathcal{C}$ such that
$$\forall \xi \in \Xi : \mathbb{P}(\xi \notin \mathcal{Y}) < \alpha,$$
and, by Lemma \ref{lem:characMsuec}, there exists a set $\mathcal{Z} \subset \mathcal{C}$ such that
\begin{equation}
\mu_{\text{\upshape{uec}}}(\mathcal{Z}) < \left(\frac{N}{2N + 2}\right)^{-1/2} \epsilon\label{eq:muecZ}
\end{equation}
and
$$\forall \xi \in \Xi : \mathbb{P}(\xi \notin \mathcal{Z}) < \beta.$$
Put 
$$\mathcal{X} = \mathcal{Y} \cap \mathcal{Z}.$$
Then $\mathcal{X}$ is uniformly bounded. Also, by Theorem \ref{thm:qAA} and (\ref{eq:muecZ}),
$$\mu_{\text{\upshape H},\infty}(\mathcal{X}) \leq \left(\frac{N}{2N + 2}\right)^{1/2} \mu_{\text{\upshape{uec}}}(\mathcal{X}) \leq \left(\frac{N}{2N + 2}\right)^{1/2} \mu_{\text{\upshape{uec}}}(\mathcal{Z}) < \epsilon,$$
and, for $\xi \in \Xi$,
$$\mathbb{P}(\xi \notin \mathcal{X}) \leq \mathbb{P}(\xi \notin \mathcal{Y}) + \mathbb{P}(\xi \notin \mathcal{Z}) < \alpha + \beta.$$
We conclude from Lemma \ref{lem:characMut} that
$$\mu_{\text{\upshape{ut}}}(\Xi) < \alpha + \beta,$$
from which the desired inequality follows.

Next, we prove that 
$$\max\{\mu_{\text{\upshape{sub}}}(\Xi), \mu_{\text{\upshape{suec}}}(\Xi)\} \leq \mu_{\text{\upshape{ut}}}(\Xi).$$

Fix $\epsilon > 0$, and $\alpha \in \mathbb{R}^+_0$ such that 
$$\mu_{\text{\upshape{ut}}}(\Xi) < \alpha.$$
By Lemma \ref{lem:characMut}, there exists a uniformly bounded set $\mathcal{X} \subset \mathcal{C}$ such that
\begin{equation}
\mu_{\text{\upshape{H}},\infty}(\mathcal{X}) < \epsilon/2\label{eq:muH}
\end{equation}
and
$$\forall \xi \in \Xi : \mathbb{P}\left(\xi \notin \mathcal{X}\right) < \alpha.$$ 
We conclude from Lemma \ref{lem:characMsub} that
$$\mu_{\text{\upshape{sub}}}(\Xi) < \alpha.$$
Moreover, by Theorem \ref{thm:qAA} and (\ref{eq:muH}), 
$$\mu_{\text{\upshape{uec}}}(\mathcal{X}) \leq 2 \mu_{\text{\upshape{H}},\infty}(\mathcal{X}) < \epsilon,$$
and Lemma \ref{lem:characMsuec} allows us to infer that
$$\mu_{\text{\upshape{suec}}}(\Xi) < \alpha,$$
which finishes the proof of the desired inequality.
\end{proof}

\section{Conclusions}

In this work, we have quantified Prokhorov's Theorem by establishing an explicit formula for the Hausdorff measure of non-compactness (HMNC) for the parametrized Prokhorov metric on the set of Borel probability measures on a Polish space (Theorem \ref{thm:qProkh}). Furthermore, we have quantified the Arzel{\`a}-Ascoli Theorem by obtaining an upper and a lower estimate for the HMNC for the uniform norm on the space of continuous maps of a compact interval into Euclidean N-space, using Jung's Theorem on the Chebyshev radius (Theorem \ref{thm:qAA}). Finally, we have combined the obtained results to quantify the stochastic Arzel{\`a}-Ascoli Theorem by providing an upper and a lower estimate for the HMNC for the parametrized Prokhorov metric on the set of multivariate continuous stochastic processes (Theorem \ref{thm:qsAA}). This work fits nicely in the research initiated in \cite{BLV11}, the aim of which is to systematically study quantitative measures, such as the HMNC, in the realm of probability theory.

\section*{Competing interests}

The author declares that there is no conflict of interests.

\section*{Authors' contributions}

The author declares to be the only contributor to this work.

\section*{Acknowledgements}

The author thanks Mark Sioen for interesting discussions concerning the topics in this work, and the Fund for Scientific Research Flanders (FWO) for its financial support.

\end{document}